\newtheorem{Theorem}{Theorem}[section]
\newtheorem{Lemma}[Theorem]{Lemma}
\newtheorem{Corollary}[Theorem]{Corollary}
\newtheorem{Remark}[Theorem]{Remark}
\newtheorem{example}[Theorem]{Example}
\newcommand{\N}{\mathbb{N}}
\newcommand{\R}{\mathbb{R}}
\renewcommand{\P}{\mathbb{P}}
\newcommand{\prob}[6]{\mathbb{P}(#5_{#3}#6#4\vert X_1=#2,X_0=#1)}
\newcommand{\hattau}[3]{\wt\tau_{#1,#2\to#3}}
\newcommand{\hatphi}[3]{{\varphi}_{#1,#2\to#3}}
\newcommand{\hatphitwo}[2]{{\varphi}_{#1\to#2}}
\renewcommand{\Delta}{\mathrm{deg}}
\DeclareMathAlphabet{\mathbbx}{U}{bbold}{m}{n}
\newcommand{\oneb}{\mathbbx{1}} 
\def\G{\mathcal{G}}
\newcommand\sou{{\tt S}}
\newcommand\ter{{\tt T}}
\newcommand\inp{\mathcal{I}}
\newcommand\out{\mathcal{O}}
\newcommand\wh{\widehat}
\newcommand\wt{\widetilde}
\newcommand{\uno}{\oneb} 
\title{Hitting times for second-order random walks}
\author{%
    \textbf{Dario Fasino}, \textbf{Arianna Tonetto}\\
    Department of Mathematics, Computer Science and Physics\\ University of Udine\\ 33100, Udine (Italy)\\ \texttt{dario.fasino@uniud.it} 
    \And
    \textbf{Francesco Tudisco}
    \\
    School of Mathematics\\
    Gran Sasso Science Institute\\
    67100, L'Aquila (Italy) \\
    \texttt{francesco.tudisco@gssi.it} 
}
\begin{document}

\maketitle

\begin{abstract}
A second-order random walk on a graph or network is a random walk where transition probabilities depend not only on the present node but also on the previous one. A notable example is the non-backtracking random walk, where the walker is not allowed to revisit a node in one step.
Second-order random walks
can model physical diffusion phenomena in a more realistic way than traditional random walks and have been very successfully used in various network mining and machine learning settings. However, numerous questions are still open for this type of stochastic processes. In this work we extend well-known results concerning mean hitting and return times of standard random walks to the second-order case. In particular, we provide simple formulas that allow us to compute these numbers by solving suitable systems of linear equations. Moreover, by introducing the ``pullback'' first-order stochastic process of a second-order random walk, we provide second-order versions of the renowned Kac's and random target lemmas. 
\end{abstract}

\paragraph{Keywords}
Second-order random walks; non-backtracking walks; hitting times; return times.

\section{Introduction}

Random walks on graphs are a key concept in network theory used to model various dynamics such as user navigation and epidemic spreading \cite{newmanbook}, to quantify node centrality \cite{newman,noh}, or to reveal communities and core-periphery structures \cite{cucuringu,dellarossa,rombach,tudisco2018core}. The standard random walk on a graph considers a hypothetical walker that starting from node $i$ moves to a new node of the graph choosing it uniformly at random among the neighbors of $i$.
This defines a memoryless stochastic process which corresponds to a Markov chain on the nodes. However, the process described by a standard graph random walk  is often 
localized and may fail to capture the underlying physical model as well as to fully exploit the global graph structure.  For this reason, 
there is a growing interest in recent years 
towards so-called higher-order stochastic processes whose evolution may depend on past states, including stochastic processes modeled by transition probability tensors \cite{benson2017spacey,fasino2019higher} as well as non-local diffusion mappings \cite{cipolla2020nonlocal, estrada2017random} and non-backtracking random walks \cite{fitzner}.

The latter is one of the best known examples of second-order stochastic processes, where node transitions depend on both the current and the previous state spaces as the process is not allowed to \textit{backtrack}, i.e.\ to immediately revisit the previous space in the next step.
Non-backtracking random walks are often more appropriate than classical random walks to model real-world diffusive phenomena on networks where a message-passing or disease-spreading analogy is relevant. In particular, non-backtracking walks can avoid unwanted localization effects on the leading eigenvector of the adjacency matrix of a graph \cite{kawamoto,krzakala,martin} and 
their mixing rate, in many cases, is faster than the one of classical random walks \cite{alon,CioabaXu}. Furthermore, many computations with non-backtracking walks have negligible overheads compared to standard approaches \cite{arrigo1,grindrod}.

The non-backtracking paradigm is at the basis of several second-order stochastic processes appearing in recent graph-theoretic, network mining and machine learning literature. 
For example, the mixing time of a clique-based version of the non-backtracking random walk on regular graphs is studied in  \cite{CioabaXu}.
Similarity  measures based on second-order random walks are used in \cite{Wu2016} to measure node proximity and improve classical walk-based link-prediction and clustering algorithms. Eigenvalues and eigenvectors of non-backtracking matrices have been shown to provide remarkable performances in community detection and in various centrality measurements \cite{arrigo1,krzakala,torres2021nonbacktracking}. The graph-embedding generated by the popular {\tt node2vec} algorithm \cite{node2vec} is based on a flexible second-order random walk depending on two parameters: one used to control the likelihood of backtracking (i.e.\ immediately revisiting a node in the walk), the other used to bias the navigation towards or away from the immediately preceding nodes.

In this work we are concerned with hitting and return times of second-order random walks on graphs with possibly countably many vertices. Roughly speaking, a hitting time is the first time at which a  stochastic process reaches a given subset of its state space, while a return time is the first time the process gets back to a given starting state.  
Hitting and return times are well understood for Markov chains, which include as a particular case the stochastic process that describes a standard random walk on a graph. In this case, the state space is given by the set of nodes and explicit formulas are available to compute mean hitting and return times 
by means  of suitable systems linear equations which, for finite and connected graphs, have unique solutions. Moreover, these quantities have numerous applications to, for example,  quantify node distances and similarities. For this reason, they are an important tool to evaluate network cohesion and node centrality and are at the basis of popular node classification and link prediction algorithms~\cite{liben2007link,nadler2009semi}. 

In this paper, we extend several results concerning mean hitting and return times of standard first-order stochastic processes to the second-order setting. In particular, we show that also mean hitting times of second-order random walks can be obtained from a set of linear equations, which has potential applications in higher-order classification and link prediction methods \cite{arrigo2020framework,benson2018simplicial,tudisco2021nonlinear}. 
Moreover, we prove that mean return times of
second-order random walks coincide with standard return times of suitably defined random walks in the original graph. In particular, mean return times for non-backtracking random walks coincide with the usual mean return times in all finite undirected graphs.

The paper is organized as follows. In Section \ref{notations} and \ref{sec:2ndORW} we present the notations used in this work and some basic results about random walks and second-order random walks, respectively. 
In Section \ref{hitting} we introduce mean hitting times and return times for second-order random walks and we present first results concerning possible ways to compute them numerically via the solution of systems of linear equations.
Our main results are in Section \ref{sec:main}.
There, we analyze second-order mean hitting and return times under what we call ``equilibrium assumption'', corresponding to the condition that the second-order random walk is at the steady state. In particular, we derive
an algebraic relationship between the hitting time matrix for a second-order random  walk and the one corresponding to a standard random walk on the directed line graph of the original graph. 
Moreover, we show second-order analogues of the Kac's and the random target lemmas for standard Markov chains.
Finally, in Section \ref{experiments}, we illustrate some of the results of this paper on 
some real-world examples.

\section{Notations and basic results}  \label{notations}

An oriented graph (or network) is a pair $\G=(V,E)$, where $V$ is the vertex (or node) set and $E$ is the set of oriented edges, i.e., ordered pairs of nodes. We say that $(i,j)\in E$ 
is an edge going from $i$ to $j$. 
From here onward, letters $i,j,k$ denote generic nodes of $\G$,
while $e,f$ are used to denote generic edges of $\G$.
Furthermore,
\begin{itemize}
\item
if $e = (i,j)\in E$ then we set $\sou(e) = i$ and $\ter(e) = j$, 
the source and terminal node of the edge $e$;
\item
for $i\in V$ we denote by $\inp_i = \{e\in E : \ter(e) = i\}$
and $\out_i = \{e\in E : \sou(e) = i\}$ the in-neighborhood and out-neighborhood of $i$, respectively.
The in-degree and the out-degree of node $i$ are $d^-_i = |\out_i|$ and $d^+_i = |\inp_i|$, respectively.
\end{itemize}
In the present work, $\G$ can have a countably infinite number of nodes. However, 
we will assume that $\G$ is locally finite, that is, $V$ is possibly infinite but each node has finite in- and out-degree.

A graph $\G = (V,E)$ can be completely described by its (possibly infinite dimensional) adjacency matrix $A=(A_{ij})$, defined as 
$$ 
    A_{ij}=\begin{cases}
    1 & \hbox{ if } (i,j)\in E ;  \\
    0 & \hbox{ otherwise.}
    \end{cases}
$$ 
If the adjacency matrix is symmetric then $\G$ is called undirected. Hence, in an undirected graph each edge $(i,j)$ has its own reciprocal edge $(j,i)$. In that case we have $d^-_i = d^+_i$ and this common value is the degree of node $i$, denoted as $d_i$.

A \emph{walk} of length $m$ on $\G$ is a sequence $v_0,v_1,\ldots,v_m \in V$ of nodes 
such that $(v_{i-1},v_i)\in E$ for $i=1,\ldots,m$. 
If for all $i,j\in V$ there exists a walk from $i$ to $j$, 
then the graph $\G$ is called strongly connected.

Finally, the symbols 
$\P$ and $\mathbb{E}$  denote the probability and the expectation, respectively. All vectors used in the text are column vectors, with possibly infinitely many entries. In particular, the symbol $\oneb$ denotes the column vector with all entries equal to 1, the size of which should be clear from the context.

\subsection{Random walks}   \label{sect:rw}

In this section, we recall from \cite{KemenySnell,markov}
some basic concepts and results in Markov chain theory.
A random walk on $\G$ is a sequence of nodes $v_0,v_1,\ldots,v_k,\ldots$ where $v_{k+1}$ is chosen 
at random between the out-neighbours of $v_k$,
according to specified probabilities  
$p_{i,j}:=\P(v_{k+1} = j | v_k = i)$
such that $p_{i,j} = 0$ if $(i,j)\notin E$ and $\sum_{j\in\out_i} p_{i,j} = 1$ for every $i\in V$. The last condition implies that $d^+_i \geq 1$ for every $i\in V$, which will be  assumed henceforth.
A standard choice for $p_{i,j}$ is 
$p_{i,j} = 1/d^+_i$, for $j\in \out_i$. In this case the random walk is called uniform. Our subsequent discussion is not restricted to this case.

A random walk can be viewed as a Markov chain $\{Y_n,n\in\N\}$ with state space $V$ and transition matrix $P$ with entries $P_{ij}=p_{i,j}$. Clearly, $P$ is a row-stochastic matrix. The random variable $Y_n$ represents the state of the walker at time $n$. 

For a fixed subset $S\subset V$ we call \emph{hitting time} to $S$ the random variable $T_S$ defined as $T_S=\min\{n\in\N:Y_n\in S\}$. In other words, $T_S$ is the first time the random walker reaches the subset $S$, starting from a given initial state $Y_0$. Based on $T_S$, we can define the {\emph{hitting probability}} $\varphi_{i\to S}$ from $i$ to $S$ as:
\begin{equation}   \label{eq:hittingpb}
   \varphi_{i\to S}= 
   \P(T_S < + \infty\vert Y_0=i)  ,
\end{equation}
i.e., the probability that, starting from node $i$, the random walker ever hits $S$. 
When $\varphi_{i\to S}=1$, it is interesting to compute the average time taken by the random walker 
to reach $S$, starting from $i$. This quantity is given by 
$$
   \tau_{i\to S}=\mathbb{E}(T_S\vert Y_0=i)
$$
and is called 
the \emph{mean hitting time}
from $i$ to $S$. 
It is well-known that the quantities $\varphi_{i\to S}$ and $\tau_{i\to S}$ can be obtained from the solution of certain linear systems, as shown by the next two theorems, whose proofs can be found in, e.g., \cite{markov}.

\begin{Theorem}  \label{theomarkovphi}
For a graph $\G=(V,E)$ and for a subset $S\subset V$, the hitting probabilities $\{\varphi_{i\to S}:i\in V\}$ are the minimal non-negative solution of the following linear system:
\begin{equation}  \label{markovsys1}
	\begin{cases}
	\varphi_{i\to S}=1 & \hbox{if $i\in S$}\\
	\varphi_{i\to S}=\sum_{j\in V} P_{i,j}\varphi_{j\to S} & \hbox{if $i\notin S$.}
	\end{cases}
\end{equation}
\end{Theorem}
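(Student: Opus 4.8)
The plan is to verify the two defining properties of a minimal non-negative solution separately: first that the family $\{\varphi_{i\to S}:i\in V\}$ actually solves the system \eqref{markovsys1}, and then that it is dominated entrywise by every non-negative solution. For the first direction I would argue as follows. If $i\in S$, then the walker starts inside $S$, so $T_S=0$ and hence $\varphi_{i\to S}=\P(T_S<+\infty\mid Y_0=i)=1$, which is the first line of the system. If $i\notin S$, I would perform a first-step analysis: conditioning on the first move $Y_1=j$, which has probability $P_{i,j}$, the Markov property lets me restart the walk afresh at $j$, so that $\P(T_S<+\infty\mid Y_1=j,\,Y_0=i)=\varphi_{j\to S}$. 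Summing over all $j\in V$ through the law of total probability gives $\varphi_{i\to S}=\sum_{j\in V}P_{i,j}\varphi_{j\to S}$, which is exactly the second line.

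The more delicate part is minimality. Let $(x_i)_{i\in V}$ be any non-negative solution of \eqref{markovsys1}; I would prove $\varphi_{i\to S}\le x_i$ for every $i$ by an unfolding-and-limit argument. For $i\in S$ this is trivial, since $x_i=1=\varphi_{i\to S}$. For $i\notin S$ I would split the defining sum as $x_i=\sum_{j\in S}P_{i,j}x_j+\sum_{j\notin S}P_{i,j}x_j=\sum_{j\in S}P_{i,j}+\sum_{j\notin S}P_{i,j}x_j$, using $x_j=1$ on $S$. Discarding the second, non-negative, sum yields the base estimate $x_i\ge\sum_{j\in S}P_{i,j}=\P(T_S\le 1\mid Y_0=i)$. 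Substituting the recursion into the retained tail term $n$ times and keeping track only of the contributions coming from paths that enter $S$ within $n$ steps, an induction gives $x_i\ge\P(T_S\le n\mid Y_0=i)$, the discarded remainder staying non-negative at each stage. Letting $n\to\infty$ and using that the events $\{T_S\le n\}$ increase to $\{T_S<+\infty\}$, continuity of $\P$ from below yields $x_i\ge\P(T_S<+\infty\mid Y_0=i)=\varphi_{i\to S}$, which establishes minimality.

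The main obstacle is the inductive bookkeeping in the minimality step: one must organize the repeated substitution so that the accumulated partial sums are recognized precisely as $\P(T_S\le n\mid Y_0=i)$ while the leftover term provably remains non-negative and therefore can only help the inequality. A further subtlety, invisible in the finite-state setting but relevant here because $\G$ may have countably many nodes, is the interchange of the limit $n\to\infty$ with the (possibly infinite) summations appearing in the unfolding. This is handled cleanly by monotonicity, namely by the fact that $\{T_S\le n\}$ increases in $n$ to $\{T_S<+\infty\}$, rather than by any absolute-convergence estimate, so continuity from below of the probability measure delivers the passage to the limit without additional hypotheses.
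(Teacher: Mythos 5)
Your proposal is correct and follows exactly the standard argument: first-step analysis shows that the hitting probabilities solve the system, and minimality follows by unfolding the recursion, discarding the non-negative remainder so that the retained partial sums are $\P(T_S\le n\mid Y_0=i)$, and passing to the limit by continuity from below. The paper does not reproduce a proof of this classical theorem (it cites Norris), but the proof it gives for the second-order analogue (Theorem \ref{theophi}) has precisely this structure, so your approach matches the paper's method in all essentials.
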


In the previous theorem, minimality of the solution means that if $\{x_i:i\in V\}$ is another non-negative solution of \eqref{markovsys1} then $\varphi_{i\to S}\leq x_i$ for all $i\in V$.

\begin{Theorem} \label{theomarkovtau}
For a graph $\G=(V,E)$ and for a subset $S\subset V$, suppose that $\varphi_{i\to S}=1$ for all $i\in V$. Then the mean hitting times $\{\tau_{i\to S}:i\in V\}$ are the minimal non-negative solution of the following linear system:
\begin{equation}  \label{markovsys2}
	\begin{cases}
	\tau_{i\to S}=0 & \hbox{if } i\in S \\
	\tau_{i\to S}=1+\sum_{j\in V} 
	P_{i,j}\tau_{j\to S} 
	& \hbox{if } i\notin S .
	\end{cases}
\end{equation}
\end{Theorem}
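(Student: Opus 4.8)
The plan is to follow the standard two-part argument for such variational characterizations: first verify that the mean hitting times solve the system \eqref{markovsys2}, then prove that they are dominated by every non-negative solution. For the first part I would use a one-step analysis. If $i\in S$ then the walker already occupies $S$ at time $0$, so $T_S=0$ and $\tau_{i\to S}=0$. If $i\notin S$, then $T_S\geq 1$ and I would condition on the first transition $Y_1=j$, which has probability $P_{i,j}$; by the Markov property the expected residual time to reach $S$ after that step is exactly $\tau_{j\to S}$, so that $\tau_{i\to S}=1+\sum_{j\in V}P_{i,j}\tau_{j\to S}$. The hypothesis $\varphi_{i\to S}=1$ for all $i$ guarantees that each $\tau_{i\to S}$ is finite and that this conditioning is legitimate.

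The second and more delicate part is minimality. Let $\{x_i:i\in V\}$ be any non-negative solution of \eqref{markovsys2}. For $i\in S$ we trivially have $x_i=0=\tau_{i\to S}$. For $i\notin S$ I would substitute the defining relation into itself repeatedly, each time exploiting that $x_j=0$ whenever $j\in S$, so that every intermediate summation index is confined to $V\setminus S$. After $n$ substitutions this should yield
\begin{equation*}
  x_i=\sum_{m=1}^{n}\P(T_S\geq m\mid Y_0=i)+R_n,
  \qquad
  R_n=\sum_{j_1,\ldots,j_n\notin S}P_{i,j_1}P_{j_1,j_2}\cdots P_{j_{n-1},j_n}\,x_{j_n}.
\end{equation*}
Since the $x_i$ are non-negative, $R_n\geq 0$, so discarding it and letting $n\to\infty$ gives
\begin{equation*}
  x_i\geq\sum_{m=1}^{\infty}\P(T_S\geq m\mid Y_0=i)=\mathbb{E}(T_S\mid Y_0=i)=\tau_{i\to S},
\end{equation*}
where the last equality is the tail-sum formula for the expectation of a non-negative integer-valued random variable.

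I expect the bookkeeping in the unfolding step to be the main obstacle. The key identity to establish is that the coefficient sum over length-$k$ paths staying outside $S$,
$$
  \sum_{j_1,\ldots,j_k\notin S}P_{i,j_1}P_{j_1,j_2}\cdots P_{j_{k-1},j_k}
  =\P(Y_1\notin S,\ldots,Y_k\notin S\mid Y_0=i)
  =\P(T_S> k\mid Y_0=i),
$$
which is precisely why confining each intermediate index to $V\setminus S$ matters. Note that this route sidesteps any need to show $R_n\to 0$ directly: because all terms are non-negative, simply dropping the remainder and appealing to monotone convergence on the partial sums suffices to obtain $x_i\geq\tau_{i\to S}$, hence the asserted minimality.
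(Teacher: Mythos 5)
Your proposal is correct and follows the standard route for this theorem, which the paper itself does not prove but delegates to Norris's book; it is also the exact strategy the paper reproduces for the second-order analogue (Theorem \ref{theotau}): one-step conditioning for the verification half, then repeated substitution of the recursion, confinement of the intermediate indices to $V\setminus S$, discarding of the non-negative remainder $R_n$, identification of the partial sums with $\sum_{m=1}^{n}\P(T_S\geq m\mid Y_0=i)$, and the tail-sum formula in the limit. One assertion in your write-up is wrong, though harmless: $\varphi_{i\to S}=1$ for all $i$ does \emph{not} guarantee that $\tau_{i\to S}$ is finite (a null-recurrent chain such as the simple symmetric random walk on $\mathbb{Z}$ hits any state almost surely but with infinite mean time). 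Finiteness is not what legitimizes the one-step decomposition: the identity $\tau_{i\to S}=1+\sum_{j}P_{i,j}\tau_{j\to S}$ holds in $[0,+\infty]$ by Tonelli's theorem and the Markov property, and your minimality argument never uses finiteness, so the proof stands once that parenthetical justification is replaced.
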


The case of finite graphs is the most usual in applications. In this case, a well-known result shows that 
for strongly connected graphs \eqref{markovsys2} has a unique solution, see e.g., Chap.\ VI of \cite{KemenySnell}. Precisely:

\begin{Theorem}    \label{uniquesolA} 
Let $\G=(V,E)$ be finite and strongly connected. For any given subset $S\subset V$, 
all the hitting probabilities to $S$ are equal to $1$ and the linear system \eqref{markovsys2} has a unique solution.
\end{Theorem}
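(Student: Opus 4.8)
The plan is to reduce both assertions to a single uniform geometric bound on the tail of $T_S$. Throughout I assume $S\neq\emptyset$, since otherwise the statement is vacuous. First I would exploit finiteness together with strong connectivity: strong connectivity of $\G$ means that $P$ is irreducible, so from every node $i$ there is a positive-probability walk reaching some node of $S$, and since $V$ is finite these walks may be taken of length at most $|V|$. This yields constants $N\in\N$ and $\epsilon\in(0,1)$ with $\P(T_S\le N\mid Y_0=i)\ge\epsilon$ for every $i\in V$; concretely one takes $N=|V|$ and $\epsilon$ the minimum, over the finitely many nodes, of the probability of one such hitting walk.

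From here the first claim follows quickly. Iterating the previous estimate over consecutive blocks of $N$ steps and using the Markov property gives $\P(T_S>kN\mid Y_0=i)\le(1-\epsilon)^k$ for all $k$, whence $\P(T_S=+\infty\mid Y_0=i)=0$ and therefore $\varphi_{i\to S}=1$ for every $i$. The same geometric tail also makes the mean hitting time finite: writing $\tau_{i\to S}=\sum_{n\ge0}\P(T_S>n\mid Y_0=i)$ and grouping the terms into blocks of length $N$ gives $\tau_{i\to S}\le N/\epsilon<\infty$. Hence the mean hitting times are well defined and, by Theorem~\ref{theomarkovtau}, they are the minimal non-negative solution of \eqref{markovsys2}.

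It remains to prove uniqueness. Setting $A=V\setminus S$ and fixing $\tau_{i\to S}=0$ for $i\in S$, the equations indexed by $A$ become $(I-P_A)x=\oneb$, where $P_A=(P_{ij})_{i,j\in A}$ is the substochastic matrix obtained by restricting $P$ to $A$. The decisive observation is that the $i$-th row sum of $P_A^{\,k}$ equals $\P(Y_1,\dots,Y_k\in A\mid Y_0=i)=\P(T_S>k\mid Y_0=i)$, which tends to $0$ by the bound above; consequently $P_A^{\,k}\to 0$ entrywise and $\rho(P_A)<1$. Therefore $I-P_A$ is nonsingular and the system has the single solution $x=(I-P_A)^{-1}\oneb=\sum_{k\ge0}P_A^{\,k}\oneb$. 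Equivalently, any difference $z$ of two solutions satisfies $z=0$ on $S$ and $z|_A=P_A^{\,k}z|_A$ for every $k$, forcing $z=0$.

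The routine ingredient is the block estimate via the Markov property. The step I expect to require the most care is the passage from strong connectivity to the uniform constants $N$ and $\epsilon$, together with the identification of the row sums of $P_A^{\,k}$ with $\P(T_S>k)$, which is what ultimately delivers $\rho(P_A)<1$; one should also keep in mind the standing convention that strong connectivity of $\G$ corresponds to irreducibility of $P$, i.e.\ that every edge carries positive transition probability.
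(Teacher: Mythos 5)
Your proof is correct. The paper does not actually prove this theorem---it cites Chapter VI of Kemeny and Snell---and your argument (a uniform geometric tail bound on $T_S$ from finiteness plus strong connectivity, followed by showing the substochastic restriction $P_A$ satisfies $\rho(P_A)<1$ so that $I-P_A$ is invertible) is precisely the classical absorbing-chain argument from that reference, so there is nothing genuinely different to compare. Your closing caveat is well placed: since the paper only requires $p_{i,j}=0$ for $(i,j)\notin E$ and does not force $p_{i,j}>0$ on every edge, strong connectivity of $\G$ yields irreducibility of $P$ only under the (implicitly intended) convention that each edge carries positive transition probability.
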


It is interesting to note that Equation \eqref{markovsys2} can be written in compact matrix-vector form as follows.
Let $t^S$ and $r^S$ be the vectors entrywise defined as follows:
\begin{equation}  \label{eq:tSrS}
   t^S_i = \begin{cases} 0 & i\in S \\
   \tau_{i\to S} & i\notin S   \end{cases}   \qquad
   r^S_i = \begin{cases} \tau^+_{i\to S} & i\in S \\
   0 & i\notin S ,  \end{cases}
\end{equation}
where the numbers $\tau^+_{i\to S}$ are defined as
\begin{equation}\label{eq:tau+}
    \tau^+_{i\to S} = 1 + \sum_{j\in V}P_{i,j}\tau_{j\to S} ,
    \qquad (i\in S) \, .
\end{equation}
 Then, the vector $t^S$  solves the singular equation 
\begin{equation}   \label{eq:tcolumn}
   (I-P)x = \oneb - r^S . 
\end{equation}
In particular, among the infinitely many solutions $x$ of this equation, the vector $t^S$ is the one characterized by the condition $t^S_i = 0$ for $i\in S$.
The quantities \eqref{eq:tau+} are called \textit{mean return times} and represent the average number of steps required to return to $S$ after leaving it from $i\in S$. Indeed, if we consider the random variable $T^+_S = \min \{n \geq 1 : Y_n \in S\}$, then it holds $\tau^+_{i\to S}= \mathbb{E}(T^+_S\vert Y_0=i)$.
The special case when $S$ is a singleton deserves a special attention, and we adopt the simplified notation $\tau_i = \mathbb{E}(T^+_i\vert Y_0=i)$. This quantity is called \textit{mean return time to $i$.}


When $\G$ is finite and strongly connected, by means of the Perron--Frobenius theorem (see, e.g., \cite{meyer}) we can conclude that the random walk $\{Y_n,n\in\N\}$ admits a unique positive invariant distribution, i.e., a vector $\pi>0$ such that 
$\pi^T\oneb = 1$ and $\pi^T P=\pi^T$. 
More generally, when  $V$ is countable, the same conclusion holds true 
if and only if the associated Markov chain is irreducible and positive recurrent, that is, $\tau_i$ is finite for at least one $i\in V$, see e.g., Thm.\ 21.12 of \cite{levin} or Thm.\ 1.7.7 of \cite{markov}. The latter property also implies that all hitting probabilities \eqref{eq:hittingpb} are equal to $1$ and all hitting times are finite. 

With the help of the invariant distribution we can obtain an explicit formula for the mean return time to a set $S \subset V$,
\begin{align*}   
   \tau_S & = 
   \mathbb{E}(T^+_S | Y_0 \in S) \nonumber \\
   & = \sum_{i\in S} \mathbb{P}(Y_0 = i | Y_0 \in S)\mathbb{E}(T^+_S | Y_0 = i)
   = \frac{1}{\sum_{j\in S} \pi_j}
   \sum_{i\in S} \pi_i \tau^+_{i\to S} .
\end{align*}
This number quantifies the average number of steps taken by a random walker initially placed in $S$ to hit again $S$, given that its probability of being in node $i$ is proportional to $\pi_i$.
Indeed, the number $\pi_i/\sum_{j\in S} \pi_j$ gives the conditional probability 
that the random walker is in $i$ given that it is in $S$. The average return time $\tau_S$ can be computed 
by means of Kac's lemma, see e.g., \cite{kac} or~\cite[Lemma 21.13]{levin}:

\begin{Lemma}   \label{lem:kac}
Let $\pi$ be the stationary density of an irreducible Markov chain on the state space $V$. Then for any $S\subset V$,
$$
    \tau_S=\frac{1}{\sum_{i\in S} \pi_i}.
$$
In particular, $\tau_i = 1/\pi_i$ for every $i\in V$.
\end{Lemma}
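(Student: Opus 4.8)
The plan is to reduce the lemma to a single normalization identity and then read that identity off from Equation~\eqref{eq:tcolumn}. The formula for $\tau_S$ established just before the lemma can be rewritten, using the vector $r^S$ from~\eqref{eq:tSrS}, as $\tau_S=(\sum_{j\in S}\pi_j)^{-1}\pi^T r^S$. Thus the entire content of the lemma is the single normalization
\begin{equation*}
   \pi^T r^S=\sum_{i\in S}\pi_i\,\tau^+_{i\to S}=1;
\end{equation*}
granting it, division by $\sum_{j\in S}\pi_j$ gives the first assertion, and the choice $S=\{i\}$ gives $\tau_i=\tau^+_{i\to i}=1/\pi_i$.

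The key step is to multiply the singular equation~\eqref{eq:tcolumn}, i.e.\ $(I-P)t^S=\oneb-r^S$, on the left by the stationary row vector $\pi^T$. Since $\pi^T P=\pi^T$, the left-hand side vanishes, while the right-hand side is $\pi^T\oneb-\pi^T r^S=1-\pi^T r^S$ because $\pi^T\oneb=\sum_i\pi_i=1$. Equating the two sides yields $\pi^T r^S=1$. For finite strongly connected $\G$ this is already a complete proof: all sums are finite and Theorem~\ref{uniquesolA} ensures that $t^S$ and $r^S$ are well defined, so no convergence question arises.

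The main obstacle lies in the countable case, where the collapse $\pi^T(I-P)t^S=\pi^T t^S-\pi^T P t^S=0$ conceals a cancellation of possibly infinite quantities. The identity $\pi^T P t^S=\sum_j(\sum_i\pi_i P_{ij})t^S_j=\pi^T t^S$ is itself harmless, being an interchange of sums of non-negative terms justified by Tonelli's theorem; equivalently one may apply $\pi^T$ to the non-negative form $t^S+r^S=\oneb+Pt^S$ of~\eqref{eq:tcolumn}. In both cases, however, isolating $\pi^T r^S=1$ requires subtracting $\pi^T t^S=\sum_{i\notin S}\pi_i\,\tau_{i\to S}$ from both sides, which is legitimate only when this quantity is finite. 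Proving that the $\pi$-averaged hitting time to $S$ is finite under positive recurrence is the delicate point, and it is precisely where the algebraic shortcut may break down. For full generality I would therefore fall back on the classical excursion proof of Kac's lemma used in the cited references~\cite{kac,levin}: the measure counting the expected number of visits to each state during one excursion of the chain away from $S$ is invariant, and normalizing it against $\pi$ forces $\pi^T r^S=1$; the singleton instance is the familiar counting identity $\pi_i\,\tau_i=1$ obtained between successive returns to $i$.
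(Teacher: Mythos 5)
The paper does not actually prove this lemma: it is quoted from the classical literature (\cite{kac}, \cite[Lemma 21.13]{levin}), so there is no internal proof to compare against. That said, your argument is sound and, interestingly, the normalization $\pi^T r^S=\sum_{i\in S}\pi_i\tau^+_{i\to S}=1$ that you isolate as ``the entire content of the lemma'' is exactly the identity the authors later derive, by the same left-multiplication of \eqref{eq:tcolumn} by $\pi^T$, inside the proof of Lemma~\ref{lem:1} (where it shows $\sum_i\alpha_i=1$). So your route is the natural algebraic one and is fully rigorous in the finite, strongly connected case covered by Theorem~\ref{uniquesolA}. Your diagnosis of the countable case is also correct and worth stating plainly: the cancellation $\pi^Tt^S-\pi^TPt^S=0$ is only meaningful when $\pi^Tt^S=\sum_{i\notin S}\pi_i\tau_{i\to S}<\infty$, and this expected hitting time from stationarity can be infinite even for irreducible positive recurrent chains (it is not implied by finiteness of each $\tau_{i\to S}$), so the matrix identity alone does not prove the lemma in full generality. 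Falling back on the excursion (cycle) argument --- the expected occupation measure over one excursion from $S$ is invariant, hence proportional to $\pi$, with total mass $\sum_{i\in S}\pi_i\tau^+_{i\to S}$ on one side and $1$ after normalization on the other --- is the standard and correct way to close that gap, and is consistent with the paper's decision to cite the classical references rather than reprove the result. In short: correct, with an honest and accurate identification of the one delicate point; what the algebraic shortcut buys is a two-line proof in the finite case and a direct link to \eqref{eq:tcolumn} and \eqref{eq:tSrS}, while the excursion proof buys validity for all positive recurrent chains on countable state spaces.
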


If $\G$ is finite then the mean hitting times fulfill the so-called random target lemma, see e.g., Lemma 10.1 of \cite{levin}.

\begin{Lemma}  \label{lem:Kemeny}
Let $P$ be irreducible, and let $\pi$ be its stationary density. Then, there exists a constant $\kappa$ such that  
$$
   \sum_{j\in V} \pi_j \tau_{i\to j} = \kappa ,
$$
independently on $i\in V$.
\end{Lemma}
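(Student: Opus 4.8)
The plan is to package the individual hitting-time systems into a single matrix identity and then read off the claim as a one-dimensional eigenspace statement. Let $M$ be the hitting time matrix with entries $M_{ij}=\tau_{i\to j}$ and $M_{ii}=0$. Its $i$-th row gives $\sum_{j\in V}\pi_j\tau_{i\to j}=(M\pi)_i$, so the lemma is equivalent to showing that the vector $M\pi$ is a constant multiple of $\oneb$. Accordingly, the whole proof reduces to identifying $M\pi$ as an element of the (one-dimensional) fixed space of $P$.

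First I would record the column equations satisfied by $M$. Fixing the singleton $S=\{j\}$ and writing $t^{(j)}$ for the $j$-th column of $M$, equation \eqref{eq:tcolumn} together with the description \eqref{eq:tSrS} of $r^{\{j\}}=\tau_j e_j$ (where $e_j$ is the $j$-th standard basis vector and $\tau_j$ is the mean return time to $j$) gives $(I-P)t^{(j)} = \oneb - \tau_j\, e_j$. By Kac's lemma (Lemma \ref{lem:kac}) we have $\tau_j = 1/\pi_j$, so the $j$-th column of $(I-P)M$ equals $\oneb - \pi_j^{-1}e_j$. Collecting all columns yields the single matrix identity
$$(I-P)M = \oneb\oneb^T - D, \qquad D=\mathrm{diag}(\pi_j^{-1}).$$

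Next I would multiply this identity on the right by $\pi$. Since $\pi$ is a probability density we have $\oneb^T\pi=1$, hence $\oneb\oneb^T\pi=\oneb$; moreover $(D\pi)_j = \pi_j^{-1}\pi_j = 1$, so $D\pi=\oneb$ as well. Subtracting gives $(I-P)(M\pi)=\oneb-\oneb=0$, i.e.\ $M\pi$ is a right null vector of $I-P$, equivalently a right eigenvector of $P$ associated with the eigenvalue $1$.

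The final step, and the only place where irreducibility (together with finiteness of $\G$, tacitly assumed in the statement) is essential, is to conclude that this eigenspace is one-dimensional. Since $P$ is a finite irreducible stochastic matrix, the Perron--Frobenius theorem guarantees that the eigenvalue $1$ is simple; as $P\oneb=\oneb$, the corresponding eigenspace is spanned by $\oneb$. Therefore $M\pi=\kappa\oneb$ for some scalar $\kappa$, which is exactly the assertion $\sum_{j\in V}\pi_j\tau_{i\to j}=\kappa$ for every $i\in V$. I expect the only delicate points to be the correct bookkeeping of the diagonal correction term $D$ when assembling the columns, and the appeal to simplicity of the eigenvalue $1$; everything else is routine linear algebra.
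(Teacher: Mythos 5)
Your proof is correct. Note that the paper does not actually prove this lemma --- it cites Lemma 10.1 of \cite{levin} --- so there is no in-paper proof to compare against; however, the identity $(I-P)T = \oneb\oneb^T - \mathrm{Diag}(\pi)^{-1}$ that you assemble from the column equations and Kac's lemma is exactly the paper's Equation \eqref{eq:Tmatrix}, which is stated immediately after the lemma, so your argument fits squarely within the paper's own framework. The textbook proof you are implicitly replacing shows directly that $h(i)=\sum_{j}\pi_j\tau_{i\to j}$ is harmonic, i.e.\ $Ph=h$, via the one-step recurrence \eqref{markovsys2}, and then invokes the fact that harmonic functions of a finite irreducible chain are constant; your version packages the same computation into a single matrix identity and replaces ``harmonic implies constant'' by the simplicity of the Perron eigenvalue $1$ of a finite irreducible stochastic matrix. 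These are equivalent, and your bookkeeping is sound: $r^{\{j\}}=\tau_j e_j=\pi_j^{-1}e_j$ by Kac's lemma, the diagonal of $T$ vanishes so its $j$-th column really is the vector $t^{\{j\}}$ of \eqref{eq:tSrS}, and $\mathrm{Diag}(\pi)^{-1}\pi=\oneb$. You are also right to flag that finiteness of $\G$ (assumed in the sentence preceding the lemma) is what licenses both the Perron--Frobenius step and the finiteness of all the $\tau_{i\to j}$ via Theorem \ref{uniquesolA}.
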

The number $\kappa = \kappa(P)$ appearing in the preceding Lemma is the renowned Kemeny's constant of $P$. This constant 
quantifies the expected number of steps to get from node $i$ to a node $j$, selected randomly according to the stationary density.

Introducing the matrix $T = (\tau_{i\to j})$, whose $ij$-th entry is the mean hitting time from $i$ to $j$, 
the claim of Lemma \ref{lem:Kemeny} can be compactly 
expressed via the identity $T\pi = \kappa \uno$. Since the 
$j$-th column of $T$ fulfills equation \eqref{eq:tcolumn} with $S = \{j\}$, using Lemma \ref{lem:kac}
it is not difficult to verify that 
this matrix solves the equation
\begin{equation}   \label{eq:Tmatrix}
   (I-P)T = \uno\uno^T - \mathrm{Diag}(\pi)^{-1} ,
\end{equation}
where $\mathrm{Diag}(\pi)$ is the diagonal matrix with diagonal entries given by the entries of the vector $\pi$. 
Actually, the matrix $T$ is the  unique solution of \eqref{eq:Tmatrix} with zero diagonal entries.

We conclude this review on random walks with a technical lemma of independent interest and that will be useful for later.
This result shows that, apart of a constant term, the vector $t^S$ defined in \eqref{eq:tSrS} can be expressed by a convex linear combination of the vectors $\{t^i, i\in S\}$. Hence, mean access times to a subset can be computed from mean access times to the single elements of the set.

\begin{Lemma}   \label{lem:1}
For any fixed subset $S\subset V$
there exist positive coefficients 
$\beta$ and  $\{\alpha_i\}$, 
for $i\in S$, 
such that $\sum_{i\in S} \alpha_i = 1$, and
$$
   t^S = \sum_{i\in S} \alpha_i t^i - \beta \uno .
$$
In particular, $\tau_{i\to S} = \sum_{j\in S} \alpha_j\tau_{i\to j} - \beta$ for every $i\in V$.
\end{Lemma}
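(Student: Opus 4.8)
The plan is to reduce the claimed identity to a statement about solutions of a single linear system, exploiting the singular characterization \eqref{eq:tcolumn} together with Kac's lemma (Lemma \ref{lem:kac}); throughout I work in the finite strongly connected setting, where $\pi>0$ and \eqref{eq:tcolumn} holds. First I would specialize \eqref{eq:tcolumn} to singletons: for each $i\in S$ the vector $t^i$ solves $(I-P)t^i=\oneb-r^i$, where the return-time vector $r^i$ is supported on the single node $i$ and, by Kac's lemma, equals $r^i=\tau_i\, e_i=(1/\pi_i)\,e_i$, with $e_i$ the $i$-th coordinate vector. Turning the right-hand sides into vectors supported on single nodes is the crucial structural observation.

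Next I would exhibit the coefficients explicitly by setting $\alpha_i=\pi_i\,\tau^+_{i\to S}$ for $i\in S$. Each is positive, since $\pi_i>0$ and $\tau^+_{i\to S}\ge 1$ by \eqref{eq:tau+}, and they sum to one: combining the displayed formula for $\tau_S$ derived just before Lemma \ref{lem:kac} with Kac's lemma gives $\sum_{i\in S}\alpha_i=\sum_{i\in S}\pi_i\tau^+_{i\to S}=\tau_S\sum_{j\in S}\pi_j=1$. With this choice, linearity yields
\[
(I-P)\sum_{i\in S}\alpha_i t^i=\Big(\sum_{i\in S}\alpha_i\Big)\oneb-\sum_{i\in S}\frac{\alpha_i}{\pi_i}e_i=\oneb-\sum_{i\in S}\tau^+_{i\to S}\,e_i=\oneb-r^S,
\]
which is precisely the right-hand side of \eqref{eq:tcolumn} written for the set $S$.

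Then I would invoke uniqueness up to an additive constant. For a finite strongly connected graph $P$ is irreducible, so by Perron--Frobenius $1$ is a simple eigenvalue and $\ker(I-P)$ is one-dimensional, spanned by $\oneb$. Since both $t^S$ and $\sum_{i\in S}\alpha_i t^i$ solve the same equation $(I-P)x=\oneb-r^S$, their difference lies in $\ker(I-P)$ and is therefore a multiple of $\oneb$, giving $t^S=\sum_{i\in S}\alpha_i t^i-\beta\oneb$ for a scalar $\beta$. Reading off the $i$-th entry for any fixed $i\in S$ and using $t^S_i=0$ and $t^i_i=0$ produces the closed form $\beta=\sum_{j\in S}\alpha_j\tau_{i\to j}$; because the $\alpha_j$ are positive and $\tau_{i\to j}>0$ whenever $j\neq i$, this is strictly positive as soon as $S$ is not a singleton (and vanishes trivially when $S=\{i\}$). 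The final entrywise assertion is just the $i$-th component of the vector identity, valid for every $i\in V$.

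The step needing the most care is the normalization $\sum_{i\in S}\alpha_i=1$: this is not formal bookkeeping but genuinely the content of Kac's lemma applied to the whole set $S$, so I would spell out the chain $\sum_{i\in S}\pi_i\tau^+_{i\to S}=\tau_S\sum_{j\in S}\pi_j=1$ in full. Once the singleton equations are placed in the form $(I-P)t^i=\oneb-(1/\pi_i)e_i$, everything else is linear algebra on the one-dimensional kernel of $I-P$.
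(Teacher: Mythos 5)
Your proof is correct and follows essentially the same route as the paper: you pick the same coefficients $\alpha_i=\pi_i\tau^+_{i\to S}$, obtain the normalization $\sum_{i\in S}\alpha_i=1$ via Kac's lemma (the paper derives it equivalently from $\pi^T(I-P)t^S=0$), observe that $\sum_{i\in S}\alpha_i t^i$ and $t^S$ solve the same singular system $(I-P)x=\oneb-r^S$, and conclude with the identical one-dimensional-kernel argument and evaluation of $\beta$ at a node of $S$. Your remark that $\beta$ vanishes when $S$ is a singleton is in fact slightly more careful than the paper's proof, which asserts $\beta>0$ without excluding that degenerate case.
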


\begin{proof}
Consider the vector $a = (\alpha_i)_{i\in V}$
with entries 
\begin{equation}   \label{eq:alpha}
   \alpha_i = \begin{cases} 0 & i\notin S \\
   \pi_i\tau^+_{i\to S} & i\in S . \end{cases}
\end{equation}
By \eqref{eq:tcolumn} we have
$$
   0 = \pi^T(I - P)t^S = \pi^T(\uno - r^S) = 
   1 - \sum_{i\in S} \pi_i\tau^+_{i\to S} .
$$
Hence $\sum_i\alpha_i = \sum_{i\in S}\pi_i\tau^+_{i\to S} = 1$. Moreover, from \eqref{eq:Tmatrix} we derive
$$ 
   (I-P) T a = (\uno\uno^T - \mathrm{Diag}(\pi)^{-1}) a
   = \uno (\uno^Ta) - r^S = \uno - r^S .
$$
This proves that the difference $Ta - t^S$ 
belongs to the kernel of $I-P$,
that is, $\sum_{i\in S} \alpha_i  t^i - t^S = \beta \uno$ for some $\beta\in \R$.
Considering the $k$-th entry of this identity
for $k\in S$, we have
$$
   \beta = \sum_{i\in S} \alpha_i  \tau_{k\to i} - t^S_k = 
   \sum_{i\in S} \alpha_i  \tau_{k\to i} > 0 ,
$$
and the proof is complete. 
\end{proof}

\section{Second-order random walks}
\label{sec:2ndORW}

A second-order random walk on a graph $\G = (V,E)$ is a 
walk $v_0,v_1,\ldots,v_k,\ldots$ where the probability of choosing $v_{k+1}$ depends not only on $v_k$ but also on $v_{k-1}$.
More precisely, we describe this random walk by means of a stochastic process $\{X_n, n \in \mathbb{N}\}$, where $X_n \in V$ represents the node where the walker is located at time $n$. Hence, there exists constant probabilities $p_{i,j,k}$ such that, for $n\geq 0$  
\begin{equation}   \label{eq:X}
   \P(X_{n+2} = k | X_{n+1} = j , X_n = i ) = p_{i,j,k} .
\end{equation}
We additionally assume that there are specific probabilities $p'_{i,j}$ for the first transition, that is,
\begin{equation}   \label{eq:X01}
   \P(X_{1} = j | X_{0} = i ) = p'_{i,j}  \qquad (i,j) \in E .
\end{equation}
A second-order random walk is not a Markov chain because it does not fulfill the Markov condition, as we need to remember the previous step in order to take the next step. However, it can be turned into a Markov chain by changing the state space from the vertices of the graph to the directed edges.  
More precisely, let $\wh\G = (\wh V,\wh E)$ be the (directed) line graph of $\G = (V,E)$, where
$\wh V = E$ and $(e,f)\in \wh E$ if and only if $\ter(e) = \sou(f)$.
Then, the second-order random walk can be seen as a (first-order) walk on $\wh\G$, that is, a sequence of directed edges $e_0,e_1,\ldots,e_k,\ldots$ where
$v_0 = \sou(e_0)$ and $v_i = \sou(e_i) = \ter(e_{i-1})$, for $n \geq 1$.
However, we must keep in mind that a 
walk of length $m$ in $\G$ corresponds to a 
walk of length $m-1$ in $\wh\G$. 

For $n\geq 0$ consider the random variable $W_n \in E$ defined as
\begin{equation}   \label{eq:defW}
   W_n = (i,j) \ \Longleftrightarrow 
   \ X_{n+1} = j, \  X_n = i .
\end{equation}
On the basis of \eqref{eq:X} and \eqref{eq:X01}, 
the sequence $\{W_n\}$ is a Markov chain with initial distribution $\mathbb{P}(W_0 = (i,j)) = \mathbb{P}(X_0 = i)p'_{i,j}$ and transition matrix $\wh P$ given by $\wh P_{(i,j)(j,k)} = p_{i,j,k}$.
Hence, if $(e,f)\in \wh E$ then $\wh P_{ef} \geq 0$ 
while $\wh P_{ef} = 0$ if $(e,f)\notin \wh E$.
Note that we allow the case $\wh P_{ef} = 0$ for some pair $(e,f)\in \wh E$,
which may occur in some special circumstances,
notably the non-backtracking random walk on $\G$, see Example \ref{ex:nbtrw} below.

\begin{example}   \label{ex:classic}
The uniform random walk on $\wh\G$ is the Markov chain governed by the transition matrix $\wh P$,
$$
\wh P_{ef} = \begin{cases} 1/d^+_{\sou(f)} & \sou(f) = \ter(e) \\
   0 & \hbox{otherwise.} \end{cases}
$$
As $\wh P_{ef}$ do not depend on $\sou(e)$, 
the corresponding stochastic process on $\G$
is the uniform random walk where 
$P_{ij} = \mathbb{P}(X_{n+1} = j | X_n = i ) = 1/d^+_{i}$.
If $\G$ is undirected, then $\wh P$ is bi-stochastic.
\end{example}
   
\begin{example}   	\label{ex:nbtrw}
The Hashimoto graph of $\G = (V,E)$ is the (directed) graph $\mathcal{H} = (V_{\mathcal{H}},E_{\mathcal{H}})$ 
where $V_{\mathcal{H}} = E$ and $(e,f)\in E_{\mathcal{H}}$ if and only if both
$\ter(e) = \sou(f)$ and $\ter(f) \neq \sou(e)$. 
A random walk on the Hashimoto graph is obviously related to a
non-backtracking random walk on $\G$. We can look at a
random walk on $\mathcal{H}$ as a weighted random walk on the directed line graph $\wh\G = (\wh V,\wh E)$
by applying a null weight on the edges $(e,f) \in \wh E$
such that $\ter(f) = \sou(e)$. The corresponding transition matrix $\wh P$ is defined as follows.
If $e = (i,j)$ and $A$ is the adjacency matrix of $\G$, then
\begin{equation}    \label{eq:phatnbtrw}
   \wh P_{ef} = \begin{cases}
   1/(d^+_j-A_{ji}) & \sou(f) = j \hbox{ and } i \neq \ter(f) \\
   0 & \hbox{otherwise.} \end{cases}
\end{equation}
A directed edge $e = (i,j)\in E$ such that $\out_j = (j,i)$ is called dangling.
The stochasticity condition $\sum_{f\in E} \wh P_{ef} = 1$ implies that no edge in $\G$ is dangling, otherwise the out-degree of the corresponding node in $\mathcal{H}$ would be zero. 
Hence, the non-backtracking random walk on $\G$ is well defined if (and only if) $d^+_i \geq 1$ for $i\in V$ and there are no dangling edges. 
Finally, we recall from e.g., \cite{kempton} that, if $\G$ is undirected then $\wh P$ is bi-stochastic.
\end{example}

\begin{example}   \label{ex:penalized}
Let $\wh P^{(1)}$ and $\wh P^{(0)}$ be the stochastic matrices defined in the Example \ref{ex:classic} and Example \ref{ex:nbtrw}, respectively.
For any $\alpha\in(0,1)$ the matrix 
$\wh P^{(\alpha)} = \alpha \wh P^{(1)} + (1-\alpha) \wh P^{(0)}$
is stochastic (bi-stochastic, if $\G$ is undirected).
The corresponding Markov chain describes random walks in $\G$ where backtracking steps are not completely eliminated, but rather the probability of performing one such step is downweighted by a factor depending on $\alpha$. Thus, the corresponding second-order process can be called a backtrack-downweighted random walk. Combinatorial aspects of this kind of walks are addressed in, e.g., \cite{arrigo3}. 
\end{example}

\begin{Remark}
The Markov chain $\{W_n\}$ defined by \eqref{eq:defW} may not be irreducible even if $\G$ is strongly connected. For example, if $\G$ is an undirected graph consisting of a cycle, then the Hashimoto graph of $\G$ is not connected, and the matrix $\wh P$ built as in Example \ref{ex:nbtrw} is reducible.
However, the matrix $\wh P^{(\alpha)}$ defined in Example \ref{ex:penalized} is irreducible in the sole assumption that $\G$ is strongly connected.
\end{Remark}

\section{Second-order mean hitting times}\label{hitting}

In this section, we introduce the concept of mean hitting time 
for the second-order random walk $\{X_n ,  n \in \N\}$ defined by \eqref{eq:X} and \eqref{eq:X01} and we give results analogous to Theorems \ref{theomarkovphi} and \ref{theomarkovtau} for it. 
For each node $k\in V$, let $\wt T_k$ 
be the random variable counting the first time the process arrives at node $k$,
$$ 
   \wt T_k = \min\{n \geq 0 : X_n=k\} . 
$$ 
The aim of this section is to compute the corresponding \emph{mean second-order hitting time} $\wt{\tau}_{i\to k}=\mathbb{E}(\wt T_k \vert X_0=i)$, i.e., how many steps are required by a second-order random walk on average to reach $k$, starting from $i$. 
To this goal, we first compute the mean second-order hitting times to a node $k$, given both the first and the second node,
\begin{equation}   \label{eq:ttijk}
    \wt{\tau}_{i,j\to k} =
   \mathbb{E}(\wt T_k \vert X_1=j,X_0=i)\, ,
\end{equation}
and then we use these values to compute $\wt{\tau}_{i\to k}$ by means of the formula
\begin{align}
   \wt{\tau}_{i\to k} 
   & = \mathbb{E}(\wt T_k \vert X_0=i)   \nonumber \\
   & = \sum_{j\in V} \P(X_1=j\vert X_0=i)
   \mathbb{E}(\wt T_k\vert X_1=j,X_0=i)
   = \sum_{j\in V} p'_{ij} \wt{\tau}_{i,j\to k} .   \label{eq:hitt2o}
\end{align}
The above formula shows that, unlike mean hitting times for Markov chains, second-order mean hitting times depend on the choice of the first transition probabilities, as defined in \eqref{eq:X01}. 
In the sequel, we will see that there is a quite natural choice for these probabilities. 
On the other hand, the finiteness of the numbers $\wt{\tau}_{i,j\to k}$, which is essential for the finiteness of second-order hitting times, is not influenced by the values of the probabilities $p'_{ij}$.
Indeed, in an infinite network it can happen that $\prob{i}{j}{k}{+\infty}{\wt T}{=}>0$ for some $(i,j)\in E$ and $k\in V$. 
For example, this happens when $\wh P$ is the transition matrix of the uniform random walk on $\wh \G$ and this chain is transient.
In this case, $\hattau{i}{j}{k}=+\infty$ because $\hattau{i}{j}{k}$ is the expectation of a random variable that is equal to $+\infty$ with positive probability. For this reason, we first compute $\varphi_{i,j\to k}=\prob{i}{j}{k}{+\infty}{\wt T}{<}$, the probability that, starting with $X_0=i$ and $X_1=j$, the random walker reaches $k$ in finite time. 
For these quantities, the following result holds:

\begin{Theorem}   \label{theophi}
For all $k\in V$, $\{\hatphi{i}{j}{k}:(i,j)\in E\}$ is the minimal non-negative solution of the following linear system:
\begin{equation}   \label{linearphi}
	\begin{cases}
	\hatphi{i}{j}{k} = 1 & 
	\hbox{if } i = k \text{ or } j = k \\
	\hatphi{i}{j}{k} = \sum_{\ell\in V}p_{i,j,\ell} 
	\hatphi{j}{\ell}{k} & \hbox{otherwise.}
	\end{cases}
\end{equation}
\end{Theorem}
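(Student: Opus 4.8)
The plan is to reduce this second-order statement to the first-order hitting-probability theorem (Theorem~\ref{theomarkovphi}) applied to the line-graph Markov chain $\{W_n\}$ with transition matrix $\wh P$. The key observation is that $\wt T_k$, the first time the second-order walk reaches node $k$, can be rephrased as a hitting time for the chain $\{W_n\}$ to the edge-subset $\wh S_k = \{(i,j)\in E : i = k \text{ or } j = k\}$, i.e.\ the set of directed edges incident to $k$ at either endpoint. Indeed, using the correspondence $W_n = (X_n, X_{n+1})$, the event that the second-order walk has visited $k$ by time $n$ is exactly the event that the line-graph chain has visited $\wh S_k$; some care is needed for the boundary case $X_0 = k$, which corresponds to the starting edge $W_0 = (k,j)$ already lying in $\wh S_k$. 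Under this dictionary, $\hatphi{i}{j}{k}$ is precisely the first-order hitting probability $\varphi_{(i,j)\to \wh S_k}$ for $\{W_n\}$ started at $W_0 = (i,j)$.

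First I would make this identification rigorous by checking that $\{\wt T_k < +\infty \mid X_1 = j, X_0 = i\}$ and $\{T_{\wh S_k} < +\infty \mid W_0 = (i,j)\}$ are the same event, so that the two conditional probabilities coincide. Then I would simply invoke Theorem~\ref{theomarkovphi} with the graph $\wh\G$, its transition matrix $\wh P$, and the target set $S = \wh S_k$. That theorem tells us the family $\{\varphi_{(i,j)\to \wh S_k}\}$ is the minimal non-negative solution of
\begin{equation*}
	\begin{cases}
	\varphi_{(i,j)\to \wh S_k} = 1 & \text{if } (i,j)\in \wh S_k \\
	\varphi_{(i,j)\to \wh S_k} = \sum_{f\in \wh V} \wh P_{(i,j),f}\, \varphi_{f\to \wh S_k} & \text{if } (i,j)\notin \wh S_k.
	\end{cases}
\end{equation*}

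The final step is to translate this system back into the node-level notation of the statement. The condition $(i,j)\in \wh S_k$ is by definition $i = k$ or $j = k$, matching the first case of \eqref{linearphi}. For the recursive case, the sum over $f\in \wh V$ collapses to a sum over out-neighbours of $(i,j)$ in $\wh\G$, namely edges of the form $(j,\ell)$ with $\ell$ ranging over $\out_j$; using $\wh P_{(i,j),(j,\ell)} = p_{i,j,\ell}$ and writing $\varphi_{(j,\ell)\to \wh S_k} = \hatphi{j}{\ell}{k}$ gives exactly $\hatphi{i}{j}{k} = \sum_{\ell\in V} p_{i,j,\ell}\, \hatphi{j}{\ell}{k}$, the second case of \eqref{linearphi}. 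Minimality transfers verbatim from the first-order theorem, completing the proof.

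The main obstacle I anticipate is not any computation but the careful handling of the boundary indexing, since a walk of length $m$ in $\G$ corresponds to a walk of length $m-1$ in $\wh\G$, and the hitting time $\wt T_k$ is measured with the $\min\{n \geq 0\}$ convention on the node process while $T_{\wh S_k}$ is measured on the edge process. In particular one must confirm that conditioning on both $X_0 = i$ and $X_1 = j$ is genuinely equivalent to conditioning on the single edge-state $W_0 = (i,j)$, so that the first transition probabilities $p'_{ij}$ play no role here and the recursion is driven purely by $\wh P$. Once this correspondence is stated cleanly, the rest is a direct appeal to Theorem~\ref{theomarkovphi}.
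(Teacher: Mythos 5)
Your proof is correct, but it takes a genuinely different route from the paper. The paper proves Theorem~\ref{theophi} from scratch: it verifies by first-step conditioning that the probabilities $\hatphi{i}{j}{k}$ satisfy \eqref{linearphi}, and then establishes minimality by unrolling the recursion for an arbitrary non-negative solution $\{x_{i,j}\}$, identifying the resulting partial sums with $\P(\wt T_k \leq \bar n+1 \mid X_1=j, X_0=i)$, and passing to the limit --- essentially re-deriving the Norris argument in second-order notation. You instead reduce everything to Theorem~\ref{theomarkovphi} applied to the chain $\{W_n\}$ on $\wh\G$ with target $\wh S_k = \inp_k \cup \out_k$; this is legitimate, and your choice of $\wh S_k$ as the \emph{union} is exactly what is needed to reproduce the boundary condition ``$i=k$ or $j=k$'' (note that for $m\geq 1$ the chain can only enter $\out_k$ after passing through $\inp_k$, so the union only matters at time $0$). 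The event identity $\{\wt T_k<\infty\}=\{\wh T_{\wh S_k}<\infty\}$ and the equivalence of conditioning on $\{X_0=i, X_1=j\}$ with conditioning on $\{W_0=(i,j)\}$ both hold, and once they are checked the two linear systems are literally identical under the renaming $\varphi_{(i,j)\to\wh S_k}\leftrightarrow\hatphi{i}{j}{k}$, so minimality transfers verbatim. Interestingly, the paper adopts precisely your line-graph strategy later, in Section~\ref{sec:alternative} (Theorem~\ref{theo_equivalence}), but only for the mean hitting times; for the probabilities it prefers the self-contained argument. What your approach buys is brevity and a single point of appeal to the classical theory; what the paper's approach buys is independence from the line-graph machinery (which is only introduced afterwards in the exposition) and an explicit probabilistic interpretation of each truncated iterate, which is then reused almost word for word in the proof of Theorem~\ref{theotau}.
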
 

\begin{proof}
We first show that $\{\hatphi{i}{j}{k}:(i,j)\in E\}$ solves the linear system \eqref{linearphi}. Let us consider an edge $(i,j)\in E$. If $i=k$ then $X_0=k$, so $\wt T_k=0$; if $j=k$ then $X_1=k$ and $\wt T_k=1$. In both cases it immediately follows that $\hatphi{i}{j}{k}=\prob{i}{j}{k}{+\infty}{\wt T}{<}=1$. 
In the other cases, we have 
\begin{align*}
	\hatphi{i}{j}{k} & = 
	\prob{i}{j}{k}{+\infty}{\wt T}{<}  \\
	& = \sum_{\ell\in V} \P(X_2=\ell\vert X_1=j,X_0=i)
	\P(\wt T_k<+\infty\vert X_2=\ell,X_1=j,X_0=i)  \\
	& = \sum_{\ell\in V}p_{i,j,\ell}\hatphi{j}{\ell}{k} ,
\end{align*}
that is, \eqref{linearphi}.
Now let $\{x_{i,j}:(i,j)\in E\}$ be another non-negative solution of \eqref{linearphi} for a fixed $k$. We show that $x_{i,j}\geq\hatphi{i}{j}{k}$ for all $(i,j)\in E$. Indeed, if $i=k$ or $j=k$ then $x_{i,j}=\hatphi{i}{j}{k}$ and the claim follows. 
Otherwise, for $i,j\neq k$ we have $x_{i,j} = 
p_{i,j,k} + \sum_{\ell\neq k} p_{i,j,\ell} x_{j,l}$.
Let $\ell_0 = i$ and $\ell_1 = j$. 
For any $\bar n \geq 2$ we have
\begin{align*}
    x_{\ell_0,\ell_1} & = 
    p_{\ell_0,\ell_1, k} + \sum_{\ell_2\neq k} 
    p_{\ell_0,\ell_1,\ell_2} x_{\ell_1,\ell_2} \\
    & = 
    p_{\ell_0,\ell_1, k} + \sum_{\ell_2\neq k} 
    p_{\ell_0,\ell_1,\ell_2} 
    \bigg[ p_{\ell_1,\ell_2, k} + \sum_{\ell_3\neq k} 
    p_{\ell_1,\ell_2,\ell_3} x_{\ell_2, \ell_3} \Big] \\
    & = \ldots \\
    & = \sum_{n = 1}^{\bar n} \Bigg[ 
    \sum_{\ell_2,\ldots,\ell_n \neq k} 
    \prod_{t = 2}^{n} 
    p_{\ell_{t-2},\ell_{t-1},\ell_n} \Bigg]
    p_{\ell_{n-1},\ell_{n}, k} + 
    \sum_{\ell_2,\ldots,\ell_{\bar n + 1}\neq k} 
    \prod_{t = 2}^{\bar n+1} p_{\ell_{t-2},\ell_{t-1},\ell_{t}} x_{\ell_{t-1}\ell_{t}}  \\
    & \geq \sum_{n = 1}^{\bar n} \Bigg[ 
    \sum_{\ell_2\neq k}\cdots\sum_{\ell_n \neq k} 
    \prod_{t = 2}^{n} 
    p_{\ell_{t-2},\ell_{t-1},\ell_n} \Bigg]
    p_{\ell_{n-1},\ell_{n}, k} ,
\end{align*}
where we used the inequality $x_{\ell_{t-1}\ell_{t}}\geq 0$
and the empty sum (i.e., the square bracket in the $n = 1$ case) must be considered equal to $1$.
Observe that for $n = 1,2,3,\ldots$
$$
    \Bigg[ \sum_{\ell_2\neq k}\cdots\sum_{\ell_n \neq k} 
    \prod_{t = 2}^{n} 
    p_{\ell_{t-2},\ell_{t-1},\ell_n} \Bigg]
    p_{\ell_{n-1},\ell_{n}, k} 
    = \mathbb{P}(\wt T_k = n+1 | X_1 = j, X_0 = i) .
$$
Hence we have $x_{i,j} \geq \mathbb{P}(\wt T_k \leq \bar n+1 | X_1 = j, X_0 = i)$.
Finally,
\begin{align*}
    x_{i,j} & \geq \lim_{\bar n\to+\infty}
    \prob{i}{j}{k}{\bar n + 1}{\wt T}{\leq} \\ &  = \prob{i}{j}{k}{+\infty}{\wt T}{<} 
    = \hatphi{i}{j}{k} 
\end{align*}
and the claim follows. 
\end{proof}

On the basis of the preceding theorem we can compute the probability that the second-order process $\{X_n\}$ hits a node $k\in V$ in a finite number of steps. 

\begin{Corollary}    \label{cor:hitprob}
For any fixed $k\in V$ define $\hatphitwo{i}{k} = \mathbb{P}(\wt T_k < +\infty | X_0 = i )$. Then, the vector $(\hatphitwo{i}{k})_{i\in V}$ is the vector $(\hatphi{i}{j}{k})_{(i,j)\in E}$ premultiplied by  the matrix $M\in\R^{|V|\times |E|}$ given by 
\begin{equation}    \label{eq:M}
   M_{ie} = \begin{cases}
   p'_{i,j} & e = (i,j) \in E \\ 0 & \hbox{else.}
   \end{cases}
\end{equation}
\end{Corollary}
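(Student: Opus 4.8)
The plan is to derive the identity $\hatphitwo{i}{k} = (M\varphi)_i$ directly from the law of total probability, conditioning on the first step $X_1$ of the walk. This is exactly the probabilistic counterpart, at the level of hitting probabilities rather than mean hitting times, of the decomposition already carried out for mean hitting times in \eqref{eq:hitt2o}.

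First I would fix $k\in V$ and an initial node $i\in V$, and partition the event $\{\wt T_k < +\infty\}$ according to the value taken by $X_1$. Since $X_1$ can only land on an out-neighbour $j$ of $i$, with $\P(X_1 = j \mid X_0 = i) = p'_{i,j}$ by \eqref{eq:X01}, the law of total probability yields
\begin{align*}
   \hatphitwo{i}{k}
   & = \P(\wt T_k < +\infty \mid X_0 = i) \\
   & = \sum_{j\in V} \P(X_1 = j \mid X_0 = i)\,
       \P(\wt T_k < +\infty \mid X_1 = j,\, X_0 = i) \\
   & = \sum_{j:(i,j)\in E} p'_{i,j}\, \hatphi{i}{j}{k} .
\end{align*}
Then I would observe that the last sum is precisely the $i$-th entry of $M\varphi$, where $\varphi = (\hatphi{i}{j}{k})_{(i,j)\in E}$ and $M$ is the matrix in \eqref{eq:M}: row $i$ of $M$ is supported on the edges $e = (i,j)$ emanating from $i$ and carries there the weight $p'_{i,j}$, so that $(M\varphi)_i = \sum_{e=(i,j)\in E} M_{ie}\,\hatphi{i}{j}{k} = \sum_{j:(i,j)\in E} p'_{i,j}\,\hatphi{i}{j}{k}$. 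Comparing the two expressions gives the claim.

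There is essentially no hard step here: the result is a one-line conditioning argument, and the only things to check are that the conditioning is legitimate and that $M$ has been set up to reproduce exactly the weights $p'_{i,j}$ on the correct out-edges of $i$. The one point worth a remark is that the quantities $\hatphi{i}{j}{k}$ on the right-hand side are finite and well-defined thanks to Theorem \ref{theophi}, so that no convergence issue arises from the sum even when $V$ is countably infinite, since the coefficients $p'_{i,j}$ are nonnegative and sum to $1$ over $j$.
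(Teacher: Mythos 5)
Your proposal is correct and follows essentially the same route as the paper: condition on the value of $X_1$ via the law of total probability and identify the resulting sum with the $i$-th entry of $M$ applied to the vector $(\hatphi{i}{j}{k})_{(i,j)\in E}$. The extra remarks on well-definedness and convergence are sound but not needed beyond what the paper already records.
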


\begin{proof}
By elementary considerations, we have
\begin{align*}
   \hatphitwo{i}{k} & = \mathbb{P}(\wt T_k < +\infty | X_0 = i ) \\
   & = \sum_{j\in V} \P(X_1=j\vert X_0=i)
   \prob{i}{j}{k}{+\infty}{\wt T}{<} = \sum_{j\in V} p'_{i,j} 
   \hatphi{i}{j}{k}. 
\end{align*}
\end{proof}

In a similar way, we can compute the second-order mean hitting times, as shown in the next theorem:
\begin{Theorem}   \label{theotau}
	In the previous notation, if $\hatphi{i}{j}{k}=1$ for each $(i,j)\in E$, then $\{\hattau{i}{j}{k}:(i,j)\in E\}$ is the minimal non-negative solution of the following linear system:
\begin{equation}   \label{systau}
	\begin{cases}
	\hattau{i}{j}{k} = 0 & \hbox{if } i=k  \\
	\hattau{i}{j}{k} = 1 & \hbox{if }
	 j = k  \hbox{ and } i\neq k  \\
	\hattau{i}{j}{k} = 1 + \sum_{\ell\in V}
	p_{i,j,\ell}\hattau{j}{\ell}{k} & \hbox{if } i\neq k \hbox{ and } j\neq k.
	\end{cases}
	\end{equation}
\end{Theorem}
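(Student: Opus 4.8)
The plan is to follow the two-part template used for Theorem~\ref{theophi}, replacing hitting probabilities by mean hitting times, so that \eqref{systau} becomes the second-order analogue of the classical system of Theorem~\ref{theomarkovtau}. The one structural fact I will use repeatedly is that, conditioned on $X_0=i\neq k$, the shifted sequence $X'_n:=X_{n+1}$ is again a second-order random walk governed by the same probabilities $p_{\cdot,\cdot,\cdot}$ and started from the pair $(X_1,X_2)$; this is just the Markov property of the edge chain $\{W_n\}$ from \eqref{eq:defW} applied at time $1$, and it yields the identity $\wt T_k = 1 + \wt T_k'$ whenever $X_0\neq k$, where $\wt T_k'=\min\{m\geq 0:X'_m=k\}$.

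First I check that $\{\hattau{i}{j}{k}\}$ solves \eqref{systau}. The two boundary equations are immediate from $\wt T_k=\min\{n\geq 0:X_n=k\}$: if $i=k$ then $\wt T_k=0$, and if $j=k$ with $i\neq k$ then $X_1=k$ is the first hit so $\wt T_k=1$. For $i,j\neq k$ I condition on $X_2=\ell$; since $X_0,X_1\neq k$ force $\wt T_k\geq 2$, the shift identity gives $\mathbb{E}(\wt T_k\mid X_2=\ell,X_1=j,X_0=i)=1+\hattau{j}{\ell}{k}$. Summing against $p_{i,j,\ell}$ and using $\sum_\ell p_{i,j,\ell}=1$ produces the recursion $\hattau{i}{j}{k}=1+\sum_\ell p_{i,j,\ell}\hattau{j}{\ell}{k}$. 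This identity, and the whole argument below, is read in $[0,+\infty]$, so it stays valid if some mean hitting time is infinite.

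Second, and this is the substantive part, I prove minimality. Let $\{x_{i,j}\}$ be any non-negative solution of \eqref{systau}. On the boundary one has $x_{i,j}=\hattau{i}{j}{k}$ outright, so fix $i,j\neq k$ and unfold the third equation exactly as in the proof of Theorem~\ref{theophi}: at each stage I substitute the recursion into every term whose two indices are both different from $k$, while a term with second index $k$ is replaced by its boundary value $x_{\cdot,k}=1$ (the first index of every term produced stays equal to the previous second index, hence never becomes $k$, so the rows with first index $k$ never enter). After $M$ stages this writes $x_{i,j}=c_1+\cdots+c_M+\varrho_M$, where each accumulated constant $c_N$ is a finite sum of products of transition probabilities and the remainder $\varrho_M$ is a non-negative multiple sum (every factor being $\geq 0$). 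Dropping $\varrho_M$ gives $x_{i,j}\geq\sum_{N=1}^{M}c_N$ for every $M$, and letting $M\to\infty$ yields $x_{i,j}\geq\hattau{i}{j}{k}$ once we know that the constants sum to the mean hitting time.

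The main obstacle is precisely that last identification: by the same computation that turns the bracketed sums of Theorem~\ref{theophi} into the probabilities $\P(\wt T_k=n+1\mid X_1=j,X_0=i)$, one must recognise that the partial sums $\sum_{N\leq M}c_N$ increase to $\mathbb{E}(\wt T_k\mid X_1=j,X_0=i)=\hattau{i}{j}{k}$, i.e.\ that $c_N$ collects exactly the tail and point probabilities of $\wt T_k$ whose weighted total is the mean. This is where the hypothesis $\hatphi{i}{j}{k}=1$ enters, guaranteeing $\wt T_k<+\infty$ almost surely so that these probabilities sum correctly, and the monotone partial-sum argument then also covers the case $\hattau{i}{j}{k}=+\infty$ without change. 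Conceptually the whole statement is the specialization of Theorem~\ref{theomarkovtau} to the edge chain $\{W_n\}$ with target set $\inp_k=\{e\in E:\ter(e)=k\}$, through the relation $\hattau{i}{j}{k}=1+\mathbb{E}(T^{W}_{\inp_k}\mid W_0=(i,j))$ valid for $i\neq k$; this reduction can be used to cross-check \eqref{systau} and, if preferred, to derive minimality directly from the first-order result rather than by unfolding.
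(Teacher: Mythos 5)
Your proposal is correct and follows essentially the same route as the paper's proof: verification of \eqref{systau} by conditioning on $X_2$ (your shift identity $\wt T_k = 1+\wt T_k'$ is just a cleaner packaging of the paper's explicit series manipulation), followed by minimality via unfolding the recursion, dropping the non-negative remainder, and identifying the accumulated constants with the tail probabilities $\mathbb{P}(\wt T_k\geq n\mid X_1=j,X_0=i)$, whose partial sums dominate $\sum_{n\leq \bar n} n\,\mathbb{P}(\wt T_k=n\mid X_1=j,X_0=i)$ and converge to $\hattau{i}{j}{k}$. The reduction to the first-order system on the line graph that you mention at the end as a cross-check is also in the paper, but as a separate result (Theorem~\ref{theo_equivalence}) rather than as the proof of this theorem.
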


\begin{proof}
	We first show that $\{\hattau{i}{j}{k}:(i,j)\in E\}$ is a solution of the linear system \eqref{systau}. Let $(i,j)\in E$. If $i=k$ then $\prob{i}{j}{k}{0}{\wt T}{=}=1$ and $\hattau{i}{j}{k}=0$. If $j=k$ then $\prob{i}{j}{k}{1}{\wt T}{=}=1$ and $\hattau{i}{j}{k}=1$. Otherwise, 
	if both $i\neq k$ and $j\neq k$ then,
	by using the definition of expectation we obtain
	\begin{equation*}
	\begin{split}
	\hattau{i}{j}{k}&=\mathbb{E}(\wt T_k\vert X_1=j,X_0=i)\\
	&=2\prob{i}{j}{k}{2}{\wt T}{=}+\sum_{n\geq 3}n\prob{i}{j}{k}{n}{\wt T}{=}.
	\end{split}
	\end{equation*}
Now we observe that $\prob{i}{j}{k}{2}{\wt T}{=}=\prob{i}{j}{2}{k}{X}{=}=p_{i,j,k}$. Then $\hattau{i}{j}{k}$ is given by 
\begin{align*}
	\hattau{i}{j}{k} = &
	\, 2p_{i,j,k}+\sum_{n\geq 3}n\sum_{\ell\neq k}
	\prob{i}{j}{2}{\ell}{X}{=} 
	\P(\wt T_k=n\vert X_2=\ell,X_1=j)   \\
	= & \, 2p_{i,j,k}+\sum_{\ell\neq k}p_{i,j,\ell}
	\sum_{n\geq 3}n \,
	\P(\wt T_k=n\vert X_2=\ell,X_1=j)   \\
	= & \, 2p_{i,j,k}+\sum_{\ell\neq k}p_{i,j,\ell}
	\sum_{m\geq 2}(m+1)
	\prob{j}{\ell}{k}{m}{\wt T}{=}.
\end{align*}
Finally, since $\sum_{m\geq 2}m\prob{j}{\ell}{k}{m}{\wt T}{=}=\hattau{j}{\ell}{k}$ and $\sum_{m\geq 2}\prob{j}{\ell}{k}{m}{\wt T}{=}=1$ for $\ell\neq k$, 
	we have
\begin{align*}
	\hattau{i}{j}{k}
	& = 2p_{i,j,k}+\sum_{\ell\neq k}p_{i,j,\ell}(\hattau{j}{\ell}{k}+1)  \\
	& = 2p_{i,j,k}+\sum_{\ell\neq k}p_{i,j,\ell}\hattau{j}{\ell}{k}+\sum_{\ell\neq k}p_{i,j,\ell}  \\
	& = 1+p_{i,j,k}+\sum_{\ell\neq k}p_{i,j,\ell}\hattau{j}{\ell}{k}  
	 = 1+\sum_{\ell\in V}p_{i,j,\ell}\hattau{j}{\ell}{k},
\end{align*}
where in the last passages we have used the fact that 
$\sum_{\ell\in V}p_{i,j,\ell} = 1$.

Now let $\{y_{i,j}:(i,j)\in E\}$ be another non-negative solution of \eqref{systau}. We show that $y_{i,j}\geq\hattau{i}{j}{k}$ for all $(i,j)\in E$. If $i=k$ or $j=k$ then $y_{i,j}=\hattau{i}{j}{k}$ and the claim follows. 

For $i,j\neq k$ we have $y_{i,j} = 1 + p_{i,j,k} + \sum_{\ell\neq k} p_{i,j,\ell} y_{j,l}$.
Now, let $\ell_0 = i$ and $\ell_1 = j$. 
For any $\bar n \geq 2$ we have
\begin{align*}
    y_{\ell_0,\ell_1} & = 
    1 + p_{\ell_0,\ell_1, k} + \sum_{\ell_2\neq k} 
    p_{\ell_0,\ell_1,\ell_2} y_{\ell_1,\ell_2} \\
    & = 
    1 + p_{\ell_0,\ell_1, k} + \sum_{\ell_2\neq k} 
    p_{\ell_0,\ell_1,\ell_2} 
    \bigg[ 1 + p_{\ell_1,\ell_2, k} + \sum_{\ell_3\neq k} 
    p_{\ell_1,\ell_2,\ell_3} y_{\ell_2, \ell_3} \Big] \\
    & = \ldots \\
    & = \sum_{n = 1}^{\bar n} \Bigg[ 
    \sum_{\ell_2,\ldots,\ell_n \neq k} 
    \prod_{t = 2}^{n} 
    p_{\ell_{t-2},\ell_{t-1},\ell_n} \Bigg]
    \big( 1 + p_{\ell_{n-1},\ell_{n}, k} \big) + 
    \sum_{\ell_2,\ldots,\ell_{\bar n + 1}\neq k} 
    \prod_{t = 2}^{\bar n+1} p_{\ell_{t-2},\ell_{t-1},\ell_{t}} y_{\ell_{t-1},\ell_{t}}  \\
    & \geq \sum_{n = 1}^{\bar n} \Bigg[ 
    \sum_{\ell_2,\ldots,\ell_n \neq k} 
    \prod_{t = 2}^{n} 
    p_{\ell_{t-2},\ell_{t-1},\ell_n} \Bigg]
    \big( 1 + p_{\ell_{n-1},\ell_{n}, k} \big) ,
\end{align*}
where the empty sum (i.e., the square bracket when $n = 1$) 
must be considered equal to $1$
and the last inequality holds since 
$y_{\ell_{t-1},\ell_{t}}\geq 0$.
Rearranging terms, 
\begin{align*}
    y_{\ell_0,\ell_1} & \geq 
    1 + \sum_{\ell_2} p_{\ell_0,\ell_1,\ell_2} + \sum_{n=3}^{\bar n} 
    \sum_{\ell_2\neq k}\cdots
    \sum_{\ell_{n-1} \neq k} \sum_{\ell_n}
    \prod_{t = 2}^{n} 
    p_{\ell_{t-2},\ell_{t-1},\ell_n}  \\
    & = \sum_{n = 1}^{\bar n} \mathbb{P}(\wt T_k \geq n
    | X_1 = j, X_0 = i) \\
    & \geq \sum_{n = 1}^{\bar n} n \,
    \mathbb{P}(\wt T_k = n | X_1 = j, X_0 = i).
\end{align*}
Finally, passing to the limit $\bar n\to\infty$ we obtain
\begin{align*}
    y_{i,j} 
    & \geq \sum_{n = 1}^{\infty} n \,
    \mathbb{P}(\wt T_k = n
    | X_1 = j, X_0 = i)   = \mathbb{E}(\wt T_k | X_1 = j, X_0 = i) 
    = \hat \tau_{i,j\to k} ,
\end{align*}
and the proof is complete.
\end{proof}

Analogously to the argument of Corollary \ref{cor:hitprob}, we obtain the following characterization for the second-order hitting times directly from Equation \eqref{eq:hitt2o}.

\begin{Corollary}    \label{cor:hittim2o}
For any fixed $k\in V$, the vector $(\wt\tau_{i\to k})_{i\in V}$ coincides with the vector $(\hattau{i}{j}{k})_{(i,j)\in E}$ pre-multiplied by  the matrix $M$ in \eqref{eq:M}.
\end{Corollary}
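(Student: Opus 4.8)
The plan is to recognize that this corollary is simply the matrix-vector rephrasing of the total-expectation identity \eqref{eq:hitt2o}, in exact analogy with the proof of Corollary \ref{cor:hitprob}. First I would unpack the action of $M$ on an arbitrary vector $v = (v_e)_{e\in E}$: since $M_{ie}$ is nonzero only when $e = (i,j)\in E$, in which case $M_{ie} = p'_{i,j}$, the $i$-th entry of $Mv$ is
$$
  (Mv)_i = \sum_{e\in E} M_{ie}\, v_e = \sum_{j\in\out_i} p'_{i,j}\, v_{(i,j)} .
$$

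Next I would specialize to $v_{(i,j)} = \hattau{i}{j}{k}$ and invoke \eqref{eq:hitt2o} directly. That identity was obtained by conditioning on the value of the first transition $X_1$ given $X_0 = i$ and using \eqref{eq:X01}, namely
$$
  \wt\tau_{i\to k} = \mathbb{E}(\wt T_k \mid X_0=i) = \sum_{j\in V} \P(X_1=j\mid X_0=i)\,\mathbb{E}(\wt T_k\mid X_1=j,X_0=i) = \sum_{j\in V} p'_{i,j}\,\hattau{i}{j}{k} .
$$
Since $p'_{i,j}=0$ whenever $(i,j)\notin E$, the sum over $j\in V$ reduces to a sum over $j\in\out_i$, which matches the expression for $(Mv)_i$ above. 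Comparing the two entrywise gives $\wt\tau_{i\to k} = (Mv)_i$ for every $i\in V$, i.e.\ the vector $(\wt\tau_{i\to k})_{i\in V}$ equals $M$ times $(\hattau{i}{j}{k})_{(i,j)\in E}$, as claimed.

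There is essentially no obstacle here: the argument is the same first-step conditioning already carried out in Corollary \ref{cor:hitprob}, with the bounded indicator $\oneb\{\wt T_k < +\infty\}$ replaced by the random variable $\wt T_k$ itself. The only point deserving a word of justification is the interchange of expectation with the (possibly infinite) sum over $j$ when $V$ is countable; this is legitimate because all summands are nonnegative, so that Tonelli's theorem applies irrespective of convergence, while finiteness of each $\hattau{i}{j}{k}$ — and hence of $\wt\tau_{i\to k}$ — is guaranteed by the standing hypothesis $\hatphi{i}{j}{k}=1$ inherited from Theorem \ref{theotau}.
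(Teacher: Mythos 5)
Your proposal is correct and follows exactly the route the paper takes: the corollary is simply the matrix--vector restatement of the first-step conditioning identity \eqref{eq:hitt2o}, read off entrywise against the definition \eqref{eq:M} of $M$, in direct analogy with Corollary \ref{cor:hitprob}. Your extra remark on justifying the interchange of sums via nonnegativity is a harmless refinement of the same argument.
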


\subsection{An alternative approach}   \label{sec:alternative}

The linear system in Theorem \ref{theotau} allows us to compute second-order mean hitting times by means of the transition probabilities of the stochastic process $\{X_n,n\in\N\}$. 
The same quantities can be obtained using the fact that to any second-order random walk $\{X_n\}$ on $\G$,  corresponds  a suitable random walk  $\{W_n\}$ in $\wh\G$, as in \eqref{eq:defW}. The two walks have the same transition probability, but the second is shorter than the first by one step. By correcting this discrepancy, we can obtain mean hitting times for $\{X_n\}$ from those for $\{W_n\}$.

To this end, for any fixed subset $S\subset E$, 
consider the random variables
$$
   \wh T_S = \min\{n \geq 0 : W_n  \in S\} ,
   \qquad
   \wh T^+_S  = \min\{n \geq 1 : W_n \in S\} ,
$$ 
and the corresponding expectations
\begin{equation}   \label{eq:hattau}
      \wh\tau_{e\to S} = \mathbb{E}(T_S | W_0 = e) ,
   \qquad
   \wh\tau^+_{e\to S} = \mathbb{E}(T^+_S | W_0 = e) . 
\end{equation}
These quantities can be computed by means of the techniques recalled in Section \ref{sect:rw}, since $\{W_n\}$ is a classical Markov chain.
The theorem below shows how mean hitting \eqref{eq:hattau} for $\{W_n\}$ relate to the solution of the linear system in \eqref{systau}. 
It follows that the problem of computing second-order mean hitting times in $\G$ can be expressed as the problem of computing traditional mean hitting times in $\wh\G$. 

Let us consider a node $k\in V$. By Theorem \ref{theomarkovtau}, $\{\hat\tau_{e\to\inp_k}: e\in E\}$ is the minimal non-negative solution of the following linear system:
\begin{equation}   \label{linsysA}
   \begin{cases}
   \hat\tau_{e\to\inp_k}=0 & \hbox{if } e\in\inp_k \\
   \hat\tau_{e\to\inp_k}=1+\sum_{f\in E}\widehat{P}_{ef}\hat\tau_{f\to\inp_k} & \hbox{if $e\notin\inp_k$.}
   \end{cases}
\end{equation}

\begin{Theorem}   \label{theo_equivalence}
In the previous notation, it holds
$$
	\hattau{i}{j}{k} = \begin{cases}
	0 & \hbox{if } i=k     \\
	\hat\tau_{(i,j)\to\inp_k} + 1 & 
	\hbox{if } i\neq k .
	\end{cases}
$$
\end{Theorem}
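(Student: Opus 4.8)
The plan is to prove the identity by lifting everything to the line-graph Markov chain $\{W_n\}$ and establishing a pathwise relation between the two hitting times, from which the claim follows by taking conditional expectations. The key observation is that, by \eqref{eq:defW}, one has $W_n = (X_n, X_{n+1})$, so that $\ter(W_n) = X_{n+1}$ and hence the event $\{W_n \in \inp_k\}$ coincides with $\{X_{n+1} = k\}$ for every $n \geq 0$. This is precisely why the correct target set for $\{W_n\}$ is $\inp_k$, the set of edges pointing into $k$, and it also foreshadows the one-step shift responsible for the additive $+1$.

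First I would dispose of the case $i = k$, where $X_0 = k$ forces $\wt T_k = 0$, matching the first branch. For $i \neq k$, the core step is the pathwise identity $\wt T_k = \wh T_{\inp_k} + 1$, valid conditionally on $X_0 = i$ and understood in $[0,+\infty]$ with $+\infty + 1 = +\infty$. Indeed, since $X_0 \neq k$, the walk $X$ first reaches $k$ at some time $m \geq 1$ characterized by $X_1,\dots,X_{m-1} \neq k$ and $X_m = k$; translating each of these conditions through $\{W_n \in \inp_k\} = \{X_{n+1} = k\}$ shows that $m$ is the first time $W$ enters $\inp_k$ shifted by one, i.e.\ $\wh T_{\inp_k} = m-1$. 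The same translation shows that $\wt T_k$ and $\wh T_{\inp_k}$ are simultaneously infinite.

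It then remains to take expectations. Since $\{X_0 = i, X_1 = j\} = \{W_0 = (i,j)\}$ and $\{W_n\}$ has transition law $\wh P$ consistent with $\{X_n\}$, conditioning the pathwise identity on $W_0 = (i,j)$ and using linearity of expectation yields $\hattau{i}{j}{k} = \mathbb{E}(\wh T_{\inp_k} \mid W_0 = (i,j)) + 1 = \hat\tau_{(i,j)\to\inp_k} + 1$, with both sides equal to $+\infty$ together when the relevant hitting probability is not $1$.

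I expect the main obstacle to be the first-passage bookkeeping in the core step: one must check that the index shift respects \emph{first} entrance times, so that no earlier visit of $W$ to $\inp_k$ is missed, rather than merely matching the two events at a single fixed time. As an alternative that stays closer to the linear-system viewpoint of this section, one could instead verify directly that the proposed values ($0$ for $i=k$ and $\hat\tau_{(i,j)\to\inp_k}+1$ for $i \neq k$) satisfy the system \eqref{systau}, using \eqref{linsysA} together with the equivalence $(i,j)\in\inp_k \iff j = k$, and then conclude by minimality; this route replaces the probabilistic argument with a short three-case computation.
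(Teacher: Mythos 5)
Your proof is correct, but it takes a genuinely different route from the paper's. The paper never argues pathwise: it sets up a bijection between the non-negative solutions of the two linear systems \eqref{linsysA} and \eqref{systau} (via the shift $y_{i,j}=z_{i,j}+1$ for $i\neq k$, $y_{i,j}=0$ for $i=k$, and its inverse) and then invokes minimality of both $\{\hat\tau_{e\to\inp_k}\}$ and $\{\hattau{i}{j}{k}\}$ as characterized by Theorems \ref{theomarkovtau} and \ref{theotau}. You instead prove the sample-path identity $\wt T_k=\wh T_{\inp_k}+1$ on $\{X_0\neq k\}$, which follows cleanly from $\{W_n\in\inp_k\}=\{X_{n+1}=k\}$ holding for \emph{every} $n$, so that the two sets of visit times differ by a constant shift and hence so do their minima; taking conditional expectation given $W_0=(i,j)$ finishes the argument. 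Your worry about first-entrance bookkeeping is thus already resolved by the set-level correspondence. What each approach buys: yours is shorter, bypasses the minimality machinery entirely, and remains valid (with $+\infty+1=+\infty$) even when the hitting probabilities are not all equal to $1$, whereas the paper's version implicitly leans on the hypotheses of Theorems \ref{theomarkovtau} and \ref{theotau}; the paper's version, on the other hand, stays within the linear-system framework of the section and makes the explicit solution-to-solution correspondence available for reuse (e.g., for transferring computations between $\G$ and $\wh\G$). Your sketched alternative at the end is essentially the paper's proof, but note that verifying one direction and invoking minimality of $\{\hattau{i}{j}{k}\}$ only gives the inequality $\hattau{i}{j}{k}\leq\hat\tau_{(i,j)\to\inp_k}+1$; you would still need the reverse substitution, as the paper does, to conclude equality.
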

\begin{proof}
We show that each non-negative solution of \eqref{linsysA} corresponds to a non-negative solution of \eqref{systau} and vice-versa. 
Let $\{z_{i,j}:(i,j)\in E\}$ be a non-negative solution of \eqref{linsysA}. For all $(i,j)\in E$, we define $y_{i,j}$ as
$$
	y_{i,j}=
	\begin{cases}
	0 & \hbox{if } i = k \\
	z_{i,j}+1 & \hbox{if } i\neq k .
	\end{cases}
$$
Then $\{y_{i,j}:(i,j)\in E\}$ is a non-negative solution of \eqref{systau}. In fact, if $j=k$ but $i\neq k$ then $y_{i,j}=1$. 
On the other hand, when $i\neq k$ and $j\neq k$
we have
\begin{align*}
	y_{i,j} = 1+z_{i,j} & = 
	1 + \Bigl(1+\sum_{\ell\in V}
	\widehat{P}_{(i,j)(j,\ell)}z_{j,\ell}\Bigr)  \\
	& = 1 + \sum_{\ell\in V} \widehat{P}_{(i,j)(j,\ell)}
	(1+z_{\ell,m}) 
    = 1 + \sum_{\ell\in V}
	p_{i,j,\ell} \, y_{j,\ell} .
\end{align*}
In the previous passages we used the fact that $\wh P$ is stochastic and $\wh P_{(i,j)(\ell,m)} = 0$ if $\ell \neq j$. 
Conversely, if $\{y_{i,j}:(i,j)\in E\}$ is a non-negative solution of \eqref{systau}, then $y_{i,j}\geq 1$ if $i\neq k$. Thus, if we put, for all $(i,j)\in E$,
$$
	z_{i,j} = \begin{cases}
	0 & \hbox{if } i = k   \\
	y_{i,j}-1 & \hbox{if } i\neq k ,
	\end{cases}
$$
then $\{z_{i,j}:(i,j)\in E\}$ is a non-negative solution of \eqref{linsysA}.
The thesis follows by considering the minimal non-negative solution of both systems.
\end{proof}

\subsection{Second-order mean return times}  \label{sec:returntimes}

In this section, we formulate the concept of return time for second-order random walks and we give an explicit formula for their computation in 
arbitrary networks, whenever the numbers $\wt\tau_{i,j\to k}$ can be computed from \eqref{systau}. 
For a set $S\subset V$, consider the random variable 
\[
   \wt T^+_S = \min\{n \geq 1 : X_n \in S\} . 
\]
We are interested in computing the \emph{second-order mean return time} to $S$, namely, $\wt\tau_S=\mathbb{E}(\wt T^+_S \vert X_0 \in S)$. This quantity represents the average number of steps that are required to reach again $S$, starting from a node in $S$ and moving through the vertices of $\G$ according to a second-order random walk. For simplicity, in this section we limit ourselves to the single-node case $S = \{i\}$. We will go back to the general case in Section \ref{sec:final}.
Firstly, we observe that
$$
   \wt\tau_i=
   \mathbb{E}(\wt T^+_i\vert X_0=i)
   = \sum_{j\in V} \P(X_1=j\vert X_0=i)
   \mathbb{E}(\wt T^+_i\vert X_1=j,X_0=i) .
$$
Introducing the auxiliary notation 
$\wt\tau_{i,j}= \mathbb{E}(\wt T^+_i\vert X_1=j,X_0=i)$, we obtain 
\begin{equation}    \label{eq:2oret}
    \wt\tau_i = \sum_j p'_{i,j} \wt\tau_{i,j} . 
\end{equation}
The intuition behind this formula is that the random walker chooses at first an out-neighbour $j$ of $i$ at random and then follows a second-order random walk that ends with node $i$. 
In fact, $\wt\tau_{i,j}$ can be easily computed by using the vector $\{\wt\tau_{i,j\to k}:(i,j)\in E\}$ obtained from \eqref{systau}. Indeed, it holds
\begin{align*}
   \wt\tau_{i,j} & = 
   \sum_{k\in V} \P(X_2=k \vert X_1=j,X_0=i)
   \mathbb{E}(\wt T^+_i\vert
   X_2 = k,X_1=j,X_0=i)  \\
   & = \sum_{k\in V}p_{i,j,k} \bigl( 1 + 
   \mathbb{E}(\wt T_i\vert 
   X_1 = k,X_0 = j) \bigr) \\
   & = 1 + \sum_{k\in V}p_{i,j,k}\wt\tau_{j,k\to i}.
\end{align*} 
Placing the formula above into \eqref{eq:2oret}, yields
$$
   \wt\tau_i = 1 + \sum_{j,k\in V}
   p'_{i,j} p_{i,j,k}\wt\tau_{j,k\to i}. 
$$
Apparently, this formula does not admit a simple matrix-vector form as in Corollary \ref{cor:hittim2o}.
In the sequel, we propose to adopt a specific choice for the numbers $p'_{i,j}$ that allows us to obtain a result for any $S\subset V$ analogous to Lemma \ref{lem:kac}.

\section{The pullback of a second-order random walk}   \label{sec:main}

From now on, we suppose that the Markov chain associated to $\wh P$ is ergodic, so that there exists a (unique) vector $\wh \pi > 0$ with $\wh\pi^T \wh P = \wh\pi^T$. In this case, we can consider the additional assumption that the random walk $\{W_n\}$ on $\wh\G$ is at equilibrium, i.e., $\mathbb{P}(W_n = e) = \wh\pi_e$, for $n \geq 0$.
Indeed, if the Markov chain $\{W_n\}$ 
is at the stationary state, then,
owing to the correspondence between the events $X_{n+1} = i$ and $W_n \in \inp_i$ for $n \geq 0$, the analysis of the process $\{X_n\}$ is simplified and reveals new properties.

For every $e\in E$, define
\begin{equation}   \label{eq:lambdae}
   \lambda_e = \frac{\wh\pi_e}{\sum_{f:\ter(f) = \ter(e)}\wh \pi_f} . 
\end{equation}
The numbers $\lambda_e$ appearing in \eqref{eq:lambdae}
have a simple probabilistic interpretation, namely $\lambda_e = \mathbb{P}(W_n=e | W_n\in \inp_{\ter(e)})$ is 
the conditional probability that the random walker in $\wh\G$ is in $e$, given that they are also in $\inp_{\ter(e)}$.
Moreover, let $L\in\R^{|V|\times |E|}$ be the ``lifting'' matrix defined as follows:
\begin{equation}   \label{eq:L}
   L_{ie} = \begin{cases}
   \lambda_e & \ter(e) = i \\ 0 & \hbox{else,}
   \end{cases}
\end{equation}
Note that the scalars $\lambda_e$ fulfill the condition
$$
   \sum_{e\in\inp_i} \lambda_e = 1 , \qquad i = 1,\ldots, n .
$$
This condition implies that $L$ is stochastic, $L\uno = \uno$.
If $p$ is a probability vector on the nodes of $\G$, then 
$\wh p^T = p^TL$ is a probability vector defined on the edges, and
fulfills the identity $p_i = \sum_{e\in\inp_i} \wh p_e$. 
Furthermore, let $R\in\R^{|E|\times |V|}$ be the ``restriction'' matrix 
\begin{equation}   \label{eq:R}
   R_{ei} = \begin{cases}
   1 & \hbox{if } \ter(e) = i \\ 0 & \hbox{else.} \end{cases}
\end{equation}
Given any probability distribution $\wh p$ on the edges of $\G$, 
the product $p^T = \wh p^T R$ defines a probability distribution on the nodes
such that $p_i = \sum_{e\in\inp_i} \wh p_e$. 
We note in passing that the product $LR$ is the identity in $\R^n$.

\begin{Theorem}   \label{thm:main}
Let $P = L\wh P R\in\R^{n\times n}$, 
\begin{equation}    \label{eq:pullback}
   P_{ij} = \sum_{e\in \inp_i}\sum_{f\in \inp_j} \lambda_e \wh P_{ef} .
\end{equation}
Then, $P$ is an irreducible stochastic matrix. 
Assuming that the Markov chain $\{W_n\}$ is at equilibrium with stationary density $\wh\pi$, then also $\{X_n\}$ is at equilibrium with stationary density $\pi^T = \wh \pi^T R$ and, for $n \geq 1$,
$$
   P_{ij} = \mathbb{P}(X_{n+1} = j | X_n = i ) .
$$
Moreover, the initial transition probabilities \eqref{eq:X01} are 
\begin{equation}   \label{eq:initial}
   p'_{i,j} = \frac{\wh\pi_{(i,j)}}{\sum_{f\in\out_i}\wh \pi_f} , \qquad (i,j)\in E,
\end{equation}
and the vector $\pi^T = \wh \pi^T R$ is the (unique) invariant vector
of the Markov chain associated to $P$.
\end{Theorem}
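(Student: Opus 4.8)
The plan is to reduce the statement to a handful of matrix identities for the lifting/restriction pair $L,R$ and then to read off the probabilistic claims from the correspondence \eqref{eq:defW} between the events $\{X_{n+1}=i\}$ and $\{W_n\in\inp_i\}$. First I would record the elementary facts $R\uno=\uno$, $L\uno=\uno$ (already noted before the statement) and $LR=I$, together with the single identity that makes everything work, namely $\wh\pi^T R L=\wh\pi^T$. This is the crux: since $(RL)_{ef}=\lambda_f$ when $\ter(e)=\ter(f)$ and $0$ otherwise, one computes $(\wh\pi^T RL)_f=\lambda_f\sum_{e:\ter(e)=\ter(f)}\wh\pi_e=\wh\pi_f$, the last equality being exactly the normalization built into the definition \eqref{eq:lambdae} of $\lambda_f$. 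Granting these, stochasticity is immediate, $P\uno=L\wh P R\uno=L\wh P\uno=L\uno=\uno$ with $P\geq 0$; and $\pi^T=\wh\pi^T R$ is invariant because $\pi^T P=\wh\pi^T(RL)\wh P R=\wh\pi^T\wh P R=\wh\pi^T R=\pi^T$, while $\pi^T\uno=\wh\pi^T R\uno=1$.

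For irreducibility I would lift walks from $\wh\G$ to $\G$. Ergodicity of $\wh P$ makes $\wh\G$ strongly connected; given $i,j\in V$, choose $e\in\inp_i$, $f\in\inp_j$ and a directed $\wh\G$-path $e=e_0,e_1,\dots,e_m=f$ with $\wh P_{e_t e_{t+1}}>0$. Because $\lambda_e>0$ for every edge, each step forces $P_{\ter(e_t)\ter(e_{t+1})}>0$, so $\ter(e_0)=i,\dots,\ter(e_m)=j$ is a positive-weight $P$-walk; hence $P$ is irreducible. The same strong connectivity guarantees $\inp_i\neq\emptyset$ for every $i$ (each node has an out-edge by hypothesis, and in a strongly connected $\wh\G$ that edge must have a predecessor), which also yields $\pi>0$.

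Under the equilibrium hypothesis $\P(W_n=e)=\wh\pi_e$ I would translate to $\{X_n\}$ through $X_{n+1}=\ter(W_n)$ and $X_0=\sou(W_0)$. For $n\geq 1$ the event $\{X_n=i\}$ equals $\{W_{n-1}\in\inp_i\}$, so $\P(X_n=i)=\sum_{e\in\inp_i}\wh\pi_e=\pi_i$; for $n=0$, $\{X_0=i\}=\{W_0\in\out_i\}$ gives $\P(X_0=i)=\sum_{e\in\out_i}\wh\pi_e$, which again equals $\pi_i$ by the flow-conservation identity $\sum_{e\in\out_i}\wh\pi_e=\sum_{e\in\inp_i}\wh\pi_e$ (a one-line consequence of $\wh\pi^T\wh P=\wh\pi^T$, since every out-transition from an $e\in\inp_i$ lands in $\out_i$ and these exhaust the row of $\wh P$). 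Thus $\{X_n\}$ is at equilibrium with density $\pi$. For the transition law, conditioning on $W_{n-1}$ inside $\{W_{n-1}\in\inp_i\}$ and invoking the Markov property of $\{W_n\}$ gives, for $n\geq 1$, $\P(X_{n+1}=j\mid X_n=i)=\sum_{e\in\inp_i}\P(W_{n-1}=e\mid W_{n-1}\in\inp_i)\sum_{f\in\inp_j}\wh P_{ef}$; at equilibrium the conditional weight is exactly $\lambda_e$, so the right-hand side is $\sum_{e\in\inp_i}\sum_{f\in\inp_j}\lambda_e\wh P_{ef}=P_{ij}$ as in \eqref{eq:pullback}. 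The identical bookkeeping with $\{W_0\in\out_i\}$ yields $p'_{i,j}=\P(W_0=(i,j))/\P(W_0\in\out_i)=\wh\pi_{(i,j)}/\sum_{f\in\out_i}\wh\pi_f$, which is \eqref{eq:initial}. Uniqueness of the invariant vector is then Perron--Frobenius: $P$ is irreducible and we have exhibited a summable invariant probability vector, so it is the unique stationary density.

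I expect the main obstacle to be the pair of ``dual'' facts $\wh\pi^T RL=\wh\pi^T$ and its probabilistic twin, that the equilibrium conditional weight $\P(W_{n-1}=e\mid W_{n-1}\in\inp_i)$ equals $\lambda_e$: both hinge on combining the precise normalization of $\lambda_e$ in \eqref{eq:lambdae} with the stationarity of $\wh\pi$. The flow-conservation needed to settle the $n=0$ case is a smaller but genuinely separate point, and the only place where the source/terminal asymmetry of the line graph has to be handled with care.
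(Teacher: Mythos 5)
Your proposal is correct and follows essentially the same route as the paper's proof: the key identity $\wh\pi^T RL=\wh\pi^T$ for stationarity, the lifting of $\wh\G$-paths to $\G$-paths for irreducibility, and conditioning on $W_{n-1}$ inside $\{W_{n-1}\in\inp_i\}$ to identify the transition law and the initial probabilities. Your explicit derivation of the flow-conservation identity $\sum_{e\in\out_i}\wh\pi_e=\sum_{e\in\inp_i}\wh\pi_e$ from $\wh\pi^T\wh P=\wh\pi^T$ is a welcome detail the paper only asserts, but it does not change the argument.
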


\begin{proof}
First, note that the matrix $P$ is non-negative and stochastic. 
To prove that $P$ is irreducible, let $i,j\in V$ be arbitrary. Owing to the irreducibility of $\wh P$, for any $e\in\inp_i$ and $f\in\inp_j$
there is a path in $\wh\G$ from $e$ to $f$. Let $e = e_0, e_1, \ldots, e_m = f$ be such path. Then, the sequence $\ter(e_0), \ter(e_1),\ldots,\ter(e_m)$ is a path in $\G$ from $i$ to $j$. 

Thus, owing to the correspondence between $X_{n+1} = i$ and $W_n \in \inp_i$, we have 
\begin{align*}
    \mathbb{P}( X_{n+1} = j | X_n = i ) & = 
    \mathbb{P}( W_{n} \in\inp_j | W_{n-1} \in\inp_i ) \\
    & = \sum_{e\in\inp_i}\sum_{f\in\inp_j} 
    \mathbb{P} ( W_{n} = f | W_{n-1} = e)
    \mathbb{P}(W_{n-1} = e | W_{n-1} \in\inp_i) \\ 
    & = \sum_{e\in\inp_i}\sum_{f\in\inp_j} 
    \wh P_{ef} \lambda_e  = P_{ij} 
\end{align*}
for any $n \geq 1$. Moreover, consider the vector $\pi^T = \wh\pi^T R$. In detail,
\begin{equation}   \label{eq:pi_i}
   \pi_i = \sum_{e\in\inp_i}\wh\pi_e > 0 , \qquad i = 1,\ldots, n.
\end{equation}  
To prove that $\pi$ is an 
invariant vector of $P$, first observe that $\wh\pi^T RL = \wh\pi$. Indeed, owing to \eqref{eq:lambdae},
for any fixed $e\in E$  we obtain
$$
   (\wh\pi^T RL)_e = \lambda_e \sum_{f:\ter(f)=\ter(e)}\wh\pi_f
   = \wh\pi_e .
$$
Moreover, $\pi^T\uno = \wh\pi^T R\uno = \wh\pi^T\uno = 1$, eventually yielding
$$
   \pi^T P = \wh \pi^T R L\wh P R = \wh\pi^T \wh P R = \wh \pi^T R = 
   \pi^T .
$$
To complete the proof it is sufficient to observe that, if $\{W_n\}$ is at equilibrium 
then also $\{X_n\}$ is at equilibrium and
\begin{align*}
   \wh\pi_{(i,j)} = \mathbb{P}(W_0 = (i,j))
   & = \mathbb{P}(X_1 = j, X_0 = i) \\
   & = \mathbb{P}(X_1 = j | X_0 = i) 
   \mathbb{P}(X_0 = i)
   = p'_{i,j} \sum_{f\in\inp_i} \wh\pi_f . 
\end{align*}
Thus, $p'_{i,j} = \wh\pi_{(i,j)}/\sum_{f\in\inp_i} \wh\pi_f$. Finally, as $\sum_{f\in\inp_i} \wh\pi_f = \sum_{f\in\out_i} \wh\pi_f$, we obtain  \eqref{eq:initial}, which concludes the proof.
\end{proof}

We call the matrix $P$ in \eqref{eq:pullback} the \textit{pullback} of $\wh P$ on $\G$. Note that these two matrices correspond to two different stochastic processes, as briefly pointed out in Remark \ref{rem:pullback} below. Moreover, as we may expect, we remark that  the pullback operation is not injective, as different stochastic processes on $\wh \G$ may have the same pullback in $\G$. This is shown by the Example \ref{ex:example-pullback} below, where the classic and non-backtracking processes are shown to have the same pullback.

\begin{Remark}  \label{rem:pullback}
As shown in Theorem \ref{thm:main}, the entries of the pullback matrix $P$ correspond to first-order transition probabilities of the stochastic process $\{X_n\}$. However, this does not mean that $\{X_n\}$ is a Markov chain, which would be true if (and only if) the sequence $\{X_n\}$ were to satisfy the Markov condition
$\mathbb{P}(X_{n+1} = k | X_n = j)
= \mathbb{P}(X_{n+1} = k | X_n = j, X_{n-1} = i)$,
for every $(i,j),(j,k)\in E$, at least under the considered assumptions. On the other hand, Equation \eqref{eq:pullback} yields 
$$
   P_{ij} = \sum_{(h,i)\in E} \lambda_{(h,i)} p_{h,i,j} ,
$$
that is, $P_{ij}$ is a convex combinations of the numbers $p_{h,i,j}$.
Consequently, the Markov condition holds true if and only if the second-order transition probabilities \eqref{eq:X} are independent of the oldest state, in which case $\{X_n\}$ is trivially a Markov chain.

Actually, the construction of $P$ from $\wh P$ is reminiscent of a so-called \emph{lumping} of $\{W_n\}$, see e.g., \cite{WeakLump} and   \cite[Sec.~6.3-4]{KemenySnell}. Lumping methods are known to allow the construction of Markov chains with a reduced number of states from certain finite Markov chains with a larger states set. The corresponding
transition matrices are related by an identity similar to the one defining $P$ from $\wh P$ in Theorem \ref{thm:main}. 
However, the process $\{X_n\}$ is not a lumping of $\{W_n\}$ since it is not a Markov chain, even when at equilibrium. This is the reason why second-order mean hitting times differ ostensibly from mean hitting times corresponding to $P$, as shown in various numerical experiments in Section \ref{experiments}.
\end{Remark}

\begin{example}   \label{ex:example-pullback}
Let $\G$ be undirected and (strongly) connected, with adjacency matrix $A$, and let $\wh \G$ be its line graph. Consider the random walk on $\wh\G$ defined as in Example \ref{ex:classic}, with transition matrix $\wh P$.  It is not hard to verify that the pullback of $\wh P$ on $\G$ coincides with the transition matrix of the standard random walk on $\G$.
The same conclusion is true also for the matrix $\wh P$ considered 
in Example \ref{ex:nbtrw}. Indeed, if $A_{ij} = 0$ then
$$
   (L\wh PR)_{ij} = 
   \sum_{e\in\inp_i} \sum_{f\in\inp_j} \lambda_e \wh P_{ef} = 0
$$
because the condition $\sou(f) = \ter(e)$ is always false.
On the other hand, if $A_{ij} = 1$ then
\begin{align*}
   (L\wh PR)_{ij} 
   & = \frac{1}{\sum_{e\in\inp_i}} \sum_{f\in\inp_j} 
   \lambda_e \wh P_{ef} \\
   & = \sum_{e\in\inp_i} \sum_{f\in\inp_j} \frac{\lambda_e}{d^+_{\sou(f)} - 1}   \\
   & = (|\inp_i| -1) \frac{1/d^-_i}{d^+_i - 1} = \frac{1}{d_i} , 
\end{align*}
since $|\inp_i| = d^-_i = d^+_i = d_i$.
By linearity, also the pullback of the backtrack-downweighted random walk in Example \ref{ex:penalized} coincides with the standard random walk on $\G$.
\end{example}

\subsection{Hitting and return times at equilibrium}   \label{sec:final}

Due to Theorem \ref{thm:main}, it is convenient to assume that the initial probabilities of the second-order stochastic process under consideration \eqref{eq:X01} are equal to $\lambda_{(i,j)}$. Hence, from here onward we consider the ``equilibrium assumption''
$\mathbb{P}(X_0 = i) = \pi_i$ together with \eqref{eq:initial}.
With this assumption, the presentation of the mean 
hitting and return times introduced in the previous section can be 
carried out with greater clarity and simplicity.
To begin with, we derive an algebraic expression for the second-order hitting times matrix $\wt T = (\wt\tau_{i\to j})$ of a very different nature from that at the basis of Equation \eqref{eq:hitt2o}.  

\begin{Theorem}   \label{thm:tildeT}
Let $\wt T = (\wt\tau_{i\to j})_{i,j\in V}$ and $\wh T = (\wh\tau_{e\to f})_{e,f\in E}$. There exists a vector $a = (\alpha_e)_{e\in E}$ and a vector $b = (\beta_i)_{i\in V}$ such that 
$$
   \wt T = L \wh T  \, \mathrm{Diag}(a) R - \uno b^T ,
$$
where $L$ is as in \eqref{eq:L} and $R$ is as in \eqref{eq:R}.
\end{Theorem}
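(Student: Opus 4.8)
The plan is to reduce the asserted matrix identity to the edge-chain hitting times already at our disposal, and then to re-express everything as a weighted sum over \emph{incoming} edges $\inp_i$, which is exactly the structure encoded by the lifting matrix $L$. I would first combine Corollary~\ref{cor:hittim2o} with Theorem~\ref{theo_equivalence}. Writing $p'_e = p'_{i,j}$ for $e=(i,j)\in\out_i$ and using $\sum_j p'_{i,j}=1$, this gives, for $i\neq k$,
\[
   \wt\tau_{i\to k}=\sum_{j:(i,j)\in E}p'_{i,j}\,\hattau{i}{j}{k}
   =1+\sum_{e\in\out_i}p'_e\,\wh\tau_{e\to\inp_k},
\]
together with $\wt\tau_{k\to k}=0$. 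Under the equilibrium assumption, \eqref{eq:initial} reads $p'_e=\wh\pi_e/\pi_i$ for $e\in\out_i$, where I use the flow balance $\sum_{f\in\out_i}\wh\pi_f=\sum_{e\in\inp_i}\wh\pi_e=\pi_i$ coming from the stationarity of $\wh\pi$.

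The crux is to replace the sum over $\out_i$ by a sum over $\inp_i$. Since $i\neq k$ forces $\inp_i\cap\inp_k=\emptyset$, the linear system \eqref{linsysA} applies to every $e\in\inp_i$ and yields $\wh\tau_{e\to\inp_k}=1+\sum_{f\in\out_i}\wh P_{ef}\,\wh\tau_{f\to\inp_k}$ (the restriction $f\in\out_i$ being automatic, as $\wh P_{ef}\neq0$ requires $\sou(f)=\ter(e)=i$). Multiplying by $\wh\pi_e$, summing over $e\in\inp_i$, and invoking stationarity in the form $\sum_{e\in\inp_i}\wh\pi_e\wh P_{ef}=\wh\pi_f$ for $f\in\out_i$, I obtain
\[
   \sum_{e\in\inp_i}\wh\pi_e\,\wh\tau_{e\to\inp_k}
   =\pi_i+\sum_{f\in\out_i}\wh\pi_f\,\wh\tau_{f\to\inp_k}.
\]
Dividing by $\pi_i$ and substituting into the first display makes the additive $1$ cancel, leaving
\[
   \wt\tau_{i\to k}=\sum_{e\in\inp_i}\frac{\wh\pi_e}{\pi_i}\,\wh\tau_{e\to\inp_k}
   =\sum_{e\in\inp_i}\lambda_e\,\wh\tau_{e\to\inp_k},
\]
with $\lambda_e=\wh\pi_e/\pi_i$ for $e\in\inp_i$ by \eqref{eq:lambdae}. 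Because $\wh\tau_{e\to\inp_k}=0$ for $e\in\inp_k$, this identity also holds when $i=k$, hence for all $i,k\in V$.

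Finally, I would apply Lemma~\ref{lem:1} to the edge chain $\{W_n\}$ (irreducible, with stationary density $\wh\pi$) and the subset $S=\inp_k$: there exist coefficients with $\wh\tau_{e\to\inp_k}=\sum_{f\in\inp_k}\alpha_f\,\wh\tau_{e\to f}-\beta_k$, and from the proof of that lemma $\alpha_f=\wh\pi_f\,\wh\tau^+_{f\to\inp_k}$. The decisive observation is that $f\in\inp_k$ forces $\ter(f)=k$, so $\inp_k=\inp_{\ter(f)}$ and $\alpha_f$ does not depend on $k$; this lets me define the single vector $a=(\alpha_e)_{e\in E}$ by $\alpha_e=\wh\pi_e\,\wh\tau^+_{e\to\inp_{\ter(e)}}$, together with $b=(\beta_k)_{k\in V}$. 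Substituting into the last display and using $\sum_{e\in\inp_i}\lambda_e=1$,
\[
   \wt\tau_{i\to k}=\sum_{e\in\inp_i}\sum_{f\in\inp_k}\lambda_e\,\alpha_f\,\wh\tau_{e\to f}-\beta_k ,
\]
and recognizing the double sum as the $(i,k)$ entry of $L\,\wh T\,\mathrm{Diag}(a)\,R$ through the definitions \eqref{eq:L} and \eqref{eq:R}, and $\beta_k$ as the $(i,k)$ entry of $\uno b^T$, gives the claimed identity.

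The main obstacle is the second step, the passage from $\out_i$ to $\inp_i$. Carried out naively through the source-indexed matrix $M$ of Corollary~\ref{cor:hittim2o}, the construction would only produce $M$, not the terminal-indexed $L$ required here; it is precisely the stationary-flow telescoping that absorbs the additive $1$ against $\pi_i$ and reshapes the expression into an $L$-form whose edge weights $\lambda_e$ are independent of the target $k$. A secondary point to check carefully is the $k$-independence of $\alpha_e$, which rests on the identity $\inp_k=\inp_{\ter(e)}$ for $e\in\inp_k$, and the fact that the single identity for $\wt\tau_{i\to k}$ remains valid on the diagonal $i=k$.
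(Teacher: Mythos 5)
Your proof is correct, and its overall skeleton matches the paper's: both arguments first establish the entrywise identity $\wt\tau_{i\to k}=\sum_{e\in\inp_i}\lambda_e\,\wh\tau_{e\to\inp_k}$ (i.e.\ $\wt T=LC$ with $C_{ek}=\wh\tau_{e\to\inp_k}$) and then apply Lemma~\ref{lem:1} to each set $\inp_k$ to factor $C=\wh T\,\mathrm{Diag}(a)R-\uno b^T$. Where you genuinely diverge is in the derivation of that first identity. The paper obtains it in one probabilistic stroke, writing $\mathbb{E}(\wt T_k\mid X_0=i)=\mathbb{E}(\wh T_{\inp_k}\mid W_0\in\inp_i)$ and conditioning on $W_0=e$ with weights $\lambda_e$; this is short but leans on an implicit re-indexing of the edge chain (under the convention \eqref{eq:defW}, the event $X_0=i$ literally corresponds to $W_0\in\out_i$, not $\inp_i$, so one must think of $W_0$ as the edge \emph{entering} $X_0$ at equilibrium). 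You instead assemble the identity algebraically from results already proved -- Corollary~\ref{cor:hittim2o}, Theorem~\ref{theo_equivalence}, and the one-step equations \eqref{linsysA} -- and use the stationarity relation $\sum_{e\in\inp_i}\wh\pi_e\wh P_{ef}=\wh\pi_f$ to telescope the out-neighborhood sum into an in-neighborhood sum, with the additive $1$ cancelling against $\pi_i$. This costs an extra computation but makes the step fully self-contained and mechanical, and it correctly isolates the role of the flow balance $\sum_{f\in\out_i}\wh\pi_f=\sum_{e\in\inp_i}\wh\pi_e$. You also make explicit a point the paper glosses over: the coefficients $\alpha_f$ produced by Lemma~\ref{lem:1} for $S=\inp_k$ depend only on $f$ (since $\inp_k=\inp_{\ter(f)}$ for $f\in\inp_k$), which is what justifies packaging them into a single global vector $a$. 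Your verification that the identity persists on the diagonal $i=k$ is likewise a detail worth having on record.
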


\begin{proof}
Introduce the matrix $C\in\R^{|E|\times|V|}$ such that $C_{ei} = \wh\tau_{e\to \inp_i}$ is defined as in Section \ref{sec:alternative}.
From the identity
\begin{align*}
   \wt\tau_{i\to j} = \mathbb{E}(\wt T_j | X_0 = i) 
   & = \mathbb{E}(\wh T_{\inp_j} | W_0 \in\inp_i) \\ & = \sum_{e\in\inp_i}
   \mathbb{P}(W_0 = e \vert W_0\in\inp_i) \, \mathbb{E}(\wh T_{\inp_j} | W_0 = e)
   = \sum_{e\in\inp_i} \lambda_e \wh\tau_{e\to\inp_j}
\end{align*}
it follows that $\wt T = LC$.
Using the technical Lemma \ref{lem:1}, for every $e\in E$ and $i\in V$ there exist coefficients $\alpha_e$ and $\beta_{i}$ such that $\wh\tau_{e\to \inp_i} = \sum_{f\in\inp_i} \alpha_f \wh\tau_{e\to f} - \beta_i$. The latter identity can be expressed in matrix terms as
$$
   C = \wh T \, \mathrm{Diag}(a) R - \uno b^T .
$$
Finally, as  $L\uno = \uno$, we conclude.
\end{proof}

The previous theorem yields a formula for the matrix $\wt T$ that is amenable to numerical computations, once the matrix $\wh T$ has been computed by standard methods via, e.g., \eqref{eq:Tmatrix}.
The next result proves that second-order mean return times defined in Section \ref{sec:returntimes} coincide with the usual return times of the random walk associated with the pullback of $\wh P$ on $\G$.

\begin{Theorem}    \label{thm:2oKac}
Let $\pi$ be the stationary density of the pullback of $\wh P$ on $\G$. For every $S\subset V$ it holds $\wt\tau_S = 1/\sum_{i\in S}\pi_i$.
In particular, $\wt\tau_i = 1/\pi_i$ for $i\in V$.
\end{Theorem}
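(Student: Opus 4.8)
The plan is to establish a second-order analogue of Kac's lemma by reducing the computation of $\wt\tau_S$ to a first-order return-time computation in the line graph $\wh\G$, and then applying the classical Kac's lemma (Lemma \ref{lem:kac}) to the chain $\{W_n\}$. The key observation is that, under the equilibrium assumption, a return of the second-order process $\{X_n\}$ to the set $S\subset V$ corresponds exactly to a return of the first-order chain $\{W_n\}$ to the edge set $\inp_S := \bigcup_{i\in S}\inp_i\subset E$, owing to the correspondence $X_{n+1}\in S \Longleftrightarrow W_n\in\inp_S$. I would first make this correspondence precise at the level of the return-time random variables, showing that $\wt T^+_S$ for $\{X_n\}$ and $\wh T^+_{\inp_S}$ for $\{W_n\}$ are governed by the same event, up to the one-step index shift between the two walks discussed in Section \ref{sec:2ndORW}.

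Concretely, I would begin by writing $\wt\tau_S = \mathbb{E}(\wt T^+_S | X_0\in S)$ and conditioning on the first transition, so that the relevant quantity becomes a weighted average of the $\wt\tau_{i,j}$ over starting edges $(i,j)$. Under the equilibrium assumption together with \eqref{eq:initial}, the initial transition probabilities $p'_{i,j}$ are precisely the conditional edge probabilities $\lambda_{(i,j)}$ at stationarity, so the distribution of the starting edge $W_0 = (i,j)$ given $X_0\in S$ coincides with the stationary conditional distribution of $\wh\pi$ restricted to $\inp_S$. This is the step where the equilibrium assumption does the essential work: it guarantees that the second-order walk, when conditioned to start in $S$, enters the edge chain $\{W_n\}$ exactly at its stationary distribution conditioned on $\inp_S$. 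I would then identify $\wt\tau_S$ with $\mathbb{E}(\wh T^+_{\inp_S} | W_0\in\inp_S) = \wh\tau_{\inp_S}$, the first-order mean return time of $\{W_n\}$ to the edge set $\inp_S$.

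Applying Kac's lemma (Lemma \ref{lem:kac}) to the ergodic chain $\{W_n\}$ with stationary density $\wh\pi$ then gives
$$
   \wt\tau_S = \wh\tau_{\inp_S} = \frac{1}{\sum_{e\in\inp_S}\wh\pi_e} .
$$
To finish, I would rewrite the denominator in terms of the pullback stationary density $\pi$. By \eqref{eq:pi_i} we have $\pi_i = \sum_{e\in\inp_i}\wh\pi_e$, and since the in-neighborhoods $\{\inp_i : i\in S\}$ are disjoint subsets of $E$, summing over $i\in S$ yields $\sum_{e\in\inp_S}\wh\pi_e = \sum_{i\in S}\pi_i$, which is exactly the claimed formula. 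The singleton case $\wt\tau_i = 1/\pi_i$ is then immediate by taking $S = \{i\}$.

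The main obstacle I anticipate is the careful bookkeeping of the index shift between the two walks: a return of $\{X_n\}$ at step $n$ corresponds to $W_{n-1}\in\inp_S$, so I must verify that the ``$\min\{n\geq 1\}$'' defining $\wt T^+_S$ maps correctly onto the ``$\min\{n\geq 1\}$'' defining $\wh T^+_{\inp_S}$ without an off-by-one discrepancy, and that conditioning on $X_0\in S$ is genuinely equivalent to conditioning on $W_0\in\inp_S$ under equilibrium. The earlier relation between second-order hitting times and first-order hitting times to $\inp_k$ in Theorem \ref{theo_equivalence} involved an additive correction of $+1$; I expect that for \emph{return} times this correction cancels, precisely because both $\wt T^+_S$ and $\wh T^+_{\inp_S}$ already count from step $1$, but confirming this cancellation rigorously is the delicate point that the proof must address.
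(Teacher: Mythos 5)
Your proposal follows essentially the same route as the paper: identify $\wt\tau_S$ with the first-order mean return time of $\{W_n\}$ to $\wh S=\bigcup_{i\in S}\inp_i$, apply Kac's lemma to the edge chain, and use $\pi_i=\sum_{e\in\inp_i}\wh\pi_e$ together with the disjointness of the sets $\inp_i$. The only point to tighten is in your second paragraph: conditioning on $X_0\in S$ places $W_0=(X_0,X_1)$ in $\bigcup_{i\in S}\out_i$, not in $\inp_S$, and the identification with $\mathbb{E}(\wh T^+_{\inp_S}\mid W_0\in\inp_S)$ comes from the one-step shift-invariance of the stationary process (replace the condition $X_0\in S$ by $X_1\in S$) --- exactly the cancellation you flag at the end, which the paper's own proof also leaves implicit.
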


\begin{proof}
Let $\wh S = \cup_{i\in S} \inp_i$.
Then, $\wt\tau_S$ can be reformulated as the mean return time in $\wh S$ of the chain $\{W_n\}$.
From Lemma \ref{lem:kac} and \eqref{eq:pi_i}, we get
$$
   \wt\tau_{S} = 
   \mathbb{E}(\wh T^+_{\wh S} | W_0 \in \wh S)
   = \frac{1}{\sum_{i\in S}\sum_{e\in \inp_i} \wh\pi_e} 
   = \frac{1}{\sum_{i\in S}\pi_i} ,
$$
and we have the claim.
\end{proof}

\begin{example}    \label{ex:last}
Let $\G = (V,E)$ be finite and undirected. For $\alpha\in [0,1]$, the stationary density $\wh\pi$ of the matrix $\wh P^{(\alpha)}$ in Example \ref{ex:penalized} is a constant vector, $\wh\pi_e = 1/|E|$, as the transition matrix is bi-stochastic. Consequently, the second-order mean return times coincides with the mean return times of the uniform random walk on $\G$, namely, $\wt\tau_i = (\sum_j d_j)/d_i$, independently of $\alpha$ and the topology of $\G$. 
Moreover, in this example Equation \eqref{eq:initial} becomes  $\lambda_{(i,j)} = 1/d_i$ being $\G$ undirected, i.e., the first step in the second-order process coincides with one step of a standard uniform random walk on $\G$.
\end{example}

Theorem \ref{thm:2oKac} yields a second-order analogous to Kac's lemma, see Lemma \ref{lem:kac}. For a family of networks that exhibit some specific symmetries, Lemma \ref{lem:Kemeny} can also be extended to second-order random walks, as shown in the sequel.

The coefficients $\alpha_e$ appearing in the proof of Theorem \ref{thm:tildeT} come from Lemma \ref{lem:1} and, according to \eqref{eq:alpha}, can be expressed as $\alpha_e = \wh\pi_e\wh\tau^+_{e\to\inp_i}$ where 
$i = \ter(e)$ and $\wh\tau^+_{e\to\inp_i}$ represents the mean return time to $\inp_i$ of $\{W_n\}$,  after leaving it from $e\in\inp_i$, as defined in \eqref{eq:hattau}.
Suppose that these return times depend on $i$ but not on $e\in\inp_i$. This assumption is clearly verified if for every $e,f\in\inp_i$ there is an automorphism of $\wh\G$ that exchanges $e$ and $f$, i.e., the nodes $e$ and $f$ in $\wh \G$ can be exchanged without altering the topology of $\wh \G$. 
The latter condition is satisfied if, for example, $\{W_n\}$ is a uniform random walk on a bipartite regular graph, that is, an undirected, bipartite graph whose adjacency matrix has the form
$$
   \begin{pmatrix} O & \uno\uno^T \\
   \uno \uno^T & O \end{pmatrix} ,
$$
in which the diagonal blocks can have different sizes.

For notational simplicity, denote by $\rho_i = \wh\tau^+_{e\to\inp_i}$ that common value. Then, from Theorem \ref{thm:main} we have
$$
   1 = \sum_{e\in\inp_i} \alpha_e = 
   \sum_{e\in\inp_i} \wh\pi_e\wh\tau^+_{e\to\inp_i}
   = \rho_i \sum_{e\in\inp_i} \wh\pi_e 
   = \rho_i \pi_i .
$$
Thus $\rho_i = 1/\pi_i$. Moreover, for $e\in\inp_i$ we obtain
$$
   \alpha_e = \wh\pi_e \rho_i = \wh\pi_e / \pi_i .
$$
Hence, in the notation of Theorem \ref{thm:tildeT}, we have that
$\mathrm{Diag}(a) R \pi = \wh\pi$,
and thus
$$
   \wt T\pi = L\wh T \, \mathrm{Diag}(a) R \pi
   - \uno b^T\pi
   = L\wh T \wh\pi - \uno b^T\pi ,
$$
where $b = (\beta_i)_{i\in V}$ is the coefficient vector defined in Theorem \ref{thm:tildeT}. By Lemma \ref{lem:Kemeny}, we have $\wh T \wh\pi = \kappa \uno$ where $\kappa$ is the Kemeny's constant of $\wh T$, and we eventually obtain: 
$$
   \wt T\pi = \kappa L \uno - \uno b^T\pi
   = \uno (\kappa - b^T\pi) .
$$
Combining all the observations above, we obtain the following second-order equivalent of the random target lemma:

\begin{Corollary}
Let $\pi$ be the stationary density given in Theorem \ref{thm:main}. In the equilibrium assumption discussed at the beginning of Section \ref{sec:final} and using
the previous notation, we have that, 
if for every $i\in V$ the return time $\wh\tau^+_{e\to\inp_i}$ depends on $i$ but not on $e\in\inp_i$, then 
$$
   \sum_{j\in V} \pi_j \wt\tau_{i\to j} = \wt\kappa ,
$$
independently on $i$, where $\wt\kappa = \kappa - b^T\pi$.
\end{Corollary}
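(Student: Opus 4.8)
The plan is to read the quantity $\sum_{j\in V}\pi_j\wt\tau_{i\to j}$ as the $i$-th entry of the vector $\wt T\pi$ and to show that this vector is a scalar multiple of $\uno$; the scalar will then be the claimed constant $\wt\kappa$. The two ingredients are the matrix identity $\wt T = L\wh T\,\mathrm{Diag}(a)R - \uno b^T$ of Theorem~\ref{thm:tildeT} and the random target lemma (Lemma~\ref{lem:Kemeny}) applied to the ergodic line-graph chain $\{W_n\}$. The symmetry hypothesis enters only to simplify the diagonal matrix $\mathrm{Diag}(a)$.

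First I would pin down the coefficients $\alpha_e$. These come from Lemma~\ref{lem:1}, applied to $\{W_n\}$ on $\wh\G$ with target set $\inp_i$, so by \eqref{eq:alpha} they are $\alpha_e = \wh\pi_e\,\wh\tau^+_{e\to\inp_i}$ with $i=\ter(e)$. Under the hypothesis that $\wh\tau^+_{e\to\inp_i}$ is independent of $e\in\inp_i$, denote this common value by $\rho_i$. Lemma~\ref{lem:1} gives $\sum_{e\in\inp_i}\alpha_e = 1$, so $\rho_i\sum_{e\in\inp_i}\wh\pi_e = 1$; combining with $\pi_i = \sum_{e\in\inp_i}\wh\pi_e$ from \eqref{eq:pi_i} forces $\rho_i = 1/\pi_i$, hence $\alpha_e = \wh\pi_e/\pi_i$ for $e\in\inp_i$. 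Reading this over all $i$ yields the single matrix identity $\mathrm{Diag}(a)\,R\,\pi = \wh\pi$, since the $e$-th entry of the left-hand side is $\alpha_e\,\pi_{\ter(e)} = \wh\pi_e$.

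Next I would substitute into Theorem~\ref{thm:tildeT}. Right-multiplying $\wt T = L\wh T\,\mathrm{Diag}(a)R - \uno b^T$ by $\pi$ and using the identity just obtained gives $\wt T\pi = L\wh T\wh\pi - \uno b^T\pi$. Applying Lemma~\ref{lem:Kemeny} to $\{W_n\}$ gives $\wh T\wh\pi = \kappa\uno$ with $\kappa$ the Kemeny constant of the line-graph chain, and since $L\uno = \uno$ (Theorem~\ref{thm:main}) this collapses to $\wt T\pi = \kappa\uno - \uno b^T\pi = \uno(\kappa - b^T\pi)$. Setting $\wt\kappa = \kappa - b^T\pi$ and reading the identity entrywise completes the argument.

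The main obstacle is the second step: verifying that $\wh\tau^+_{e\to\inp_i}$ is genuinely constant on $\inp_i$ and recognizing that this constancy is precisely what turns $\mathrm{Diag}(a)R\pi$ into $\wh\pi$. The automorphism condition (for instance a uniform walk on a bipartite regular graph) supplies the constancy, because an automorphism of $\wh\G$ exchanging $e,f\in\inp_i$ preserves both the stationary density $\wh\pi$ and the return times to the set $\inp_i$. Once $\rho_i = 1/\pi_i$ is in hand, the remaining manipulations are just the algebra of the stochastic matrices $L$, $R$, $\wh P$ already recorded in Theorems~\ref{thm:main} and~\ref{thm:tildeT}; without the symmetry hypothesis the product $\mathrm{Diag}(a)R\pi$ need not equal $\wh\pi$, and $\wt T\pi$ would fail to reduce to a constant multiple of $\uno$.
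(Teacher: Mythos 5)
Your proposal is correct and follows essentially the same route as the paper: it derives $\alpha_e=\wh\pi_e\wh\tau^+_{e\to\inp_i}$ from Lemma \ref{lem:1}, uses the constancy hypothesis to get $\alpha_e=\wh\pi_e/\pi_i$ and hence $\mathrm{Diag}(a)R\pi=\wh\pi$, and then combines Theorem \ref{thm:tildeT} with Lemma \ref{lem:Kemeny} and $L\uno=\uno$ to obtain $\wt T\pi=\uno(\kappa-b^T\pi)$. The only (harmless) difference is bookkeeping: you attribute the normalization $\sum_{e\in\inp_i}\alpha_e=1$ to Lemma \ref{lem:1}, which is in fact the more accurate citation than the paper's reference to Theorem \ref{thm:main}.
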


\section{Numerical experiments on real-world networks}\label{experiments}

In this section, we present numerical results on the computation of second-order mean hitting times on some real-world networks.
We consider three undirected and unweighted networks: \texttt{Guppy} \cite{guppy}, that represents the social interactions of a population of  free-ranging
guppies (\emph{Poecilia reticulata}), \texttt{Dolphins} \cite{dolphins}, that represents the social structure of a group of dolphins, and \texttt{Householder93} \cite{Moler}, that represents a collaboration network of a group of mathematicians. 
Since these graphs contain dangling edges, we remove in sequence all the nodes of degree $1$ until each vertex has degree greater or equal to $2$. Table \ref{structure_networks} shows the dimension of these networks before and after removing such leaf nodes. 
Our experiments illustrate various statistics on the second-order hitting times $\wh\tau_{i\to j}$ for non-backtracking random walks in these graphs. Note that hitting times computed with the pullback transition matrix coincide with the hitting times $\tau_{i\to j}$ for the standard random walk, see Example \ref{ex:last}.

\begin{table} 
    \centering
    \begin{tabular}{l c c c c c c c}
    \toprule 
    & \multicolumn{3}{c}{\textbf{before the removal}} & &  \multicolumn{3}{c}{\textbf{after the removal}}\\
   \cline{2-4}\cline{6-8}
         & nodes & edges & diameter &  & nodes & edges & diameter \\ 
         \midrule
        \texttt{Guppy}    & 99 & 726 & 6 & & 98 & 725 & 5 \\
        \texttt{Dolphins} & 62 & 159 & 8 & & 53 & 150 & 7 \\
        \texttt{Householder93} & 104 & 211 & 7 &  &  73 & 180 & 5 \\
\bottomrule
    \end{tabular} \vspace{.3em}
      \caption{Dimension of the networks used in our experiments
   before and after the removal of leaf nodes.
   \label{structure_networks}}
\end{table}

In Figure \ref{fig:1} we scatter plot the average non-backtracking mean hitting times $\widehat{m}_j = \sum_{i\in V} \wh\tau_{i\to j}/|V|$ against the average traditional mean hitting times $m_j = \sum_{i\in V} \tau_{i\to j}/|V|$.
The dotted red line is the bisector of the first quadrant.
We observe that, in all these networks, the ratio between $m_j$ and $\widehat{m}_j$ is always smaller than $1$. Intuitively, this means that, on average, non-backtracking random walks are shorter than classical random walks.

\begin{figure}
     \centerline{\includegraphics[width=1\linewidth]{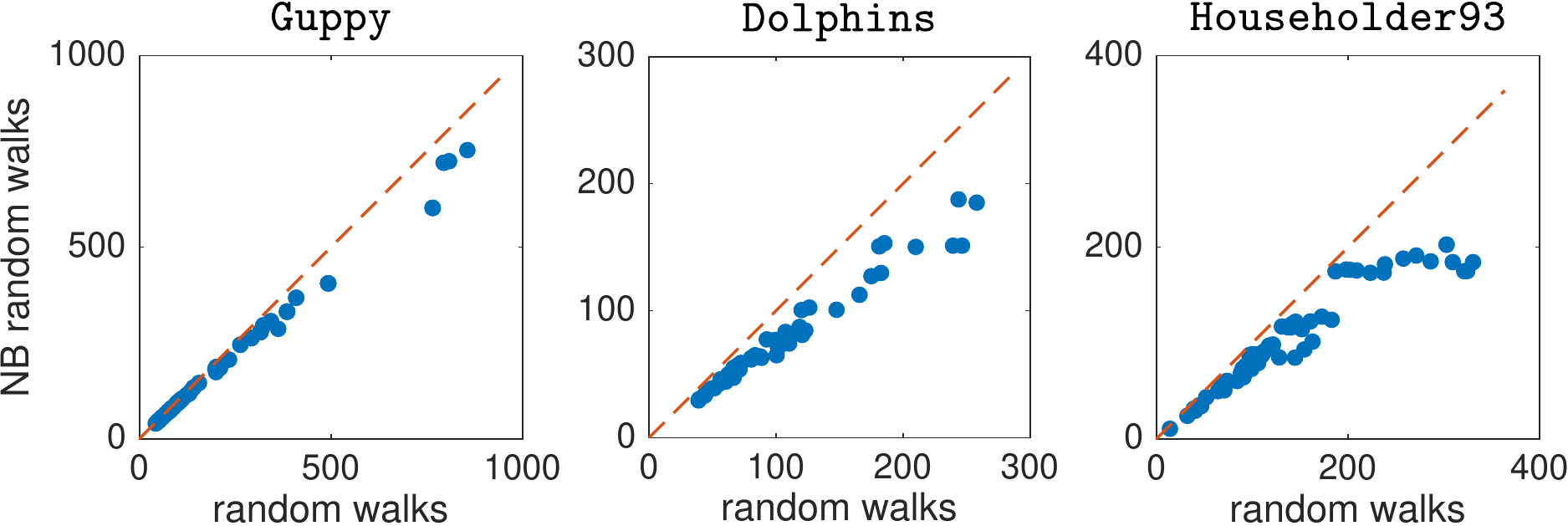}}
     \caption{Scatter plot of the average non-backtracking mean hitting times against the average traditional mean hitting times for the networks \texttt{Guppy}, \texttt{Dolphins} and \texttt{Householder93}. The dotted lines represent the bisectors.}
     \label{fig:1}
\end{figure}

Figure \ref{fig:2} displays the value of the non-backtracking mean access time 
\begin{equation}   \label{eq:ai}
   a_i = \sum_{j\in V} \pi_j \wh\tau_{i\to j}
\end{equation}
with respect to the node index $i\in V$. Recall that these values for the standard random walks are all equal to the Kemeny constant of the graph, see Lemma \ref{lem:Kemeny}. Remarkably, the values for non-backtracking walks have a very narrow range, as shown by the small variation of the values in the $y$-axes.  This experiment shows that in these graphs the claim of Corollary \ref{cor:hittim2o} is approximately valid, even though the assumptions about the structure of the graph are not satisfied.

\begin{figure}
     \centerline{\includegraphics[width=1\linewidth]{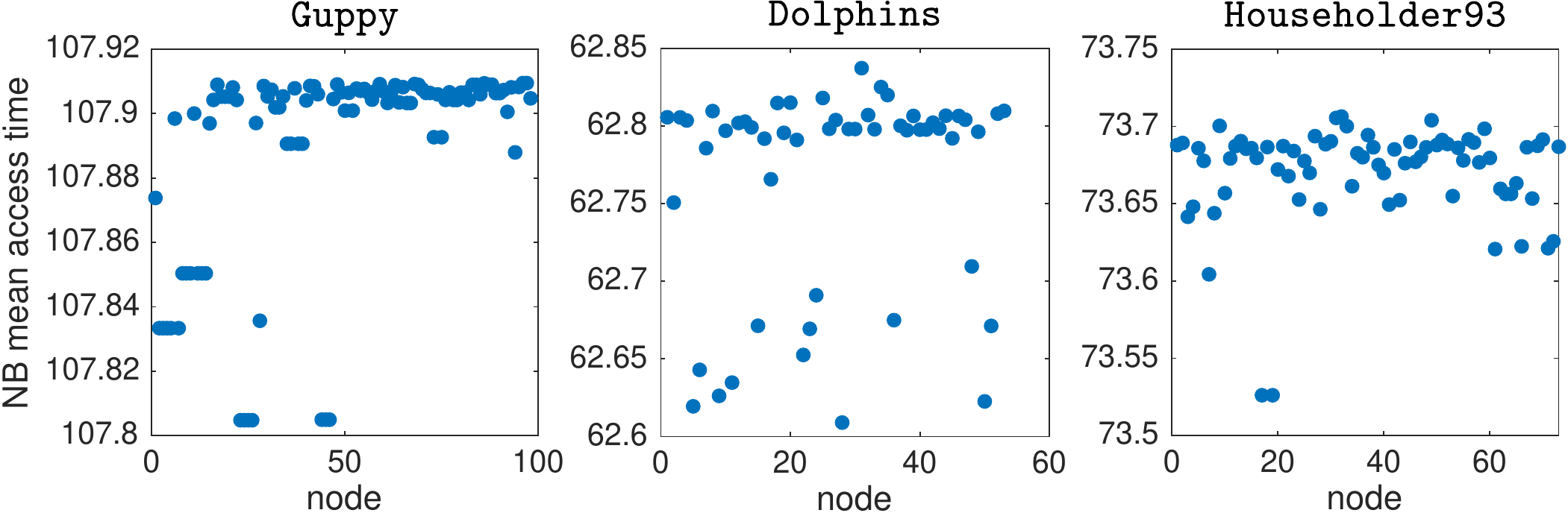}}
     \caption{Plot of the non-backtracking mean access times \eqref{eq:ai} for the networks \texttt{Guppy}, \texttt{Dolphins} and \texttt{Householder93}.}
     \label{fig:2}
 \end{figure}

Finally, we computed mean hitting times for the backtrack-downweighted random walks introduced in Example \ref{ex:penalized}, with varying $\alpha\in[0,1]$. The corresponding stochastic processes interpolate between the standard random walk ($\alpha = 1$) and the non-backtracking variant ($\alpha = 0$). Nevertheless, the pullback matrix does not depend on $\alpha$, as noted in Example \ref{ex:example-pullback}. 
Let $\tau^{(\alpha)}_{i\to j}$ denote second-order mean hitting times associated with the transition matrix $\wh P^{(\alpha)}$.
To better illustrate the effect of penalizing backtracking steps, we measure the mean hitting times ratio
\begin{equation}   \label{eq:alpha_ratios}
   r^{(\alpha)}_j = 
   \frac{\sum_{i\in V} \tau^{(\alpha)}_{i\to j}}{\sum_{i\in V}\tau^{(1)}_{i\to j}} .
\end{equation}
The curves in Figure \ref{fig:3} illustrate the behavior of the average value (magenta line), along with the maximum and the minimum value  (black lines) of these ratios. The values for $\alpha = 0$ correspond to the results shown in Figure \ref{fig:1}.

\begin{figure}
     \centerline{\includegraphics[width=1\linewidth]{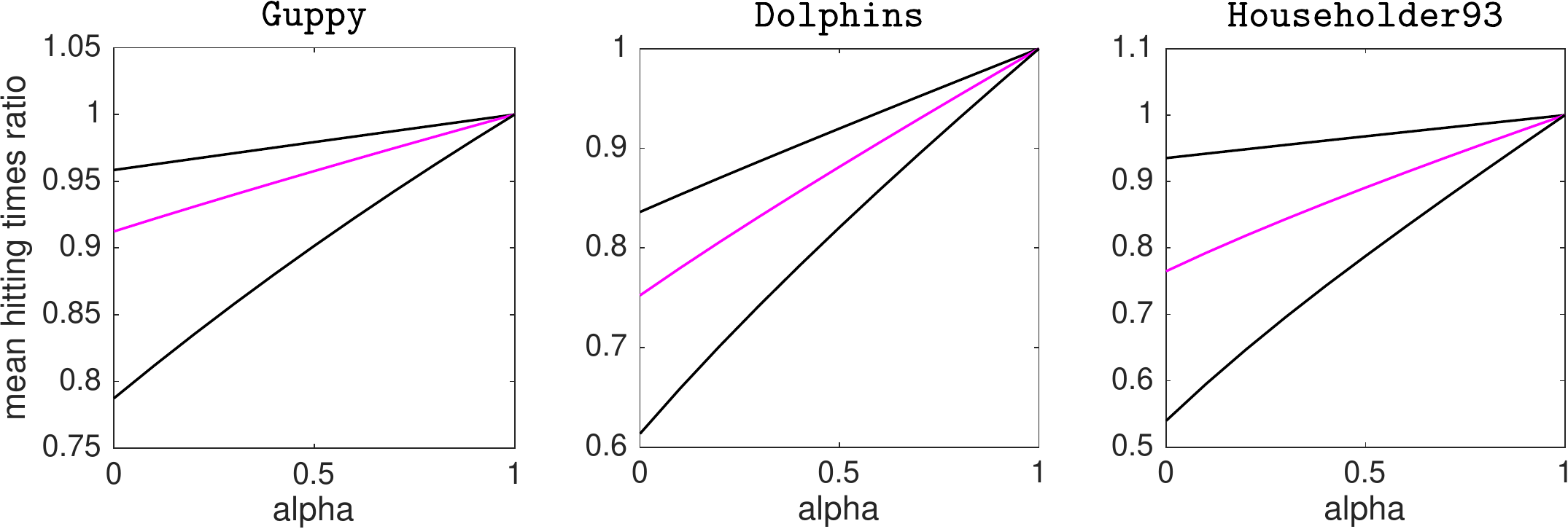}}
     \caption{Behavior of the mean hitting time ratios \eqref{eq:alpha_ratios} for the backtrack-penalized random walks. The solid lines represent maximum, average, and minimum values of the mean hitting time ratios, respectively,  as $\alpha$ varies in $[0,1]$.}
     \label{fig:3}
 \end{figure}


\section{Conclusions}

Second-order random walks, which include non-backtracking random walks as a particular case, provide stochastic models for navigation and diffusion processes on networks that can offer improved modeling capacity with respect to classical random-walks, and thus allow us to better capture real-world dynamics on networks. In this paper, we propose a very general framework to properly define mean hitting times and mean return times for second-order random walks for directed graphs with finite or countably many nodes.
Our approach is based on the correspondence between the second-order random walk and a standard random walk obtained by first lifting the second-random walk to the directed line graph and then pulling back onto the original graph. The ``pullback'' Markov chain obtained this way has the same stationary density as the initial second-order random process and allow us to transfer several well-known results for standard random walks to the second-order case. For example, we prove that second-order mean return times coincide with the mean return times of the corresponding pullback Markov chain and we show how to extend the Kac's and random target lemmas to second-order random walks.

These results prompt us to further investigations concerning computational issues as well as theoretical properties of second-order random walks. 
For instance, it is certainly interesting to devise more efficient numerical methods to
compute second-order hitting times
than the one based on  
Corollary \ref{cor:hittim2o} and Theorem \ref{thm:tildeT}.
Furthermore, 
numerical experiments show 
that non-backtracking hitting times are consistently smaller than their corresponding classical, backtracking versions. 
In fact, reducing the probability of backtracking results in a reduction of the mean hitting times. However, a formal explanation of this fact, at the best of our knowledge, is still missing.

\section*{Acknowledgement}
The work of Dario Fasino and Arianna Tonetto has been carried out in the framework of the departmental research project `ICON: Innovative Combinatorial Optimization in Networks', Department of Mathematics, Computer Science and Physics (PRID 2017), University of Udine, Italy. The work of Dario Fasino and Francesco Tudisco has been partly supported by Istituto Nazionale di Alta Matematica, INdAM-GNCS, Italy.




\end{document}